\newcommand{\abs}[1]{\vert #1 \vert}
\newcommand{\norm}[1]{\left\Vert #1 \right\Vert}
\newcommand{\C}{\mathbb{C}}
\newcommand{\R}{\mathbb{R}}
\DeclareMathOperator{\im}{Im}
\DeclareMathOperator{\re}{Re}
\newtheorem{theorem}{Theorem}
\newtheorem{lemma}{Lemma}
\theoremstyle{definition}
\theoremstyle{remark}
\title[Ill-posedness for Maxwell-Dirac]{Ill-posedness for the Maxwell-Dirac system below charge in space dimension three and lower}
\author[S.~Selberg]{Sigmund Selberg}
\author[A.~Tesfahun]{Achenef Tesfahun}
\address{Department of Mathematics, University of Bergen, PO Box 7803, 5020 Bergen, Norway}
\email{Sigmund.Selberg@uib.no}
\email{Achenef.Temesgen@uib.no}
\subjclass[2010]{35Q40; 35L60; 35L70}
\keywords{Maxwell-Dirac; well-posedness; ill-posedness}
\thanks{This work was partially supported by the project Pure Mathematics in Norway, funded by the Trond Mohn Foundation}
\begin{document}

\begin{abstract}

The Maxwell-Dirac system describes the interaction of an electron with its self-induced electromagnetic field. In space dimension $d=3$ the system is charge-critical, that is, $L^2$-critical for the spinor with respect to scaling, and local well-posedness is known almost down to the critical regularity. In the charge-subcritical dimensions $d=1,2$, global well-posedness is known in the charge class. Here we prove that these results are sharp (or almost sharp, if $d=3$), by demonstrating ill-posedness below the charge regularity. In fact, for $d \le 3$ we exhibit a spinor datum belonging to $H^s(\R^d)$ for $s<0$, and to $L^p(\R^d)$ for $1 \le p < 2$, but not to $L^2(\R^d)$, which does not admit any local solution that can be approximated by smooth solutions in a reasonable sense.
\end{abstract}

\maketitle

\section{Introduction}

We consider the Maxwell-Dirac system
\begin{equation}\label{MD}
\left\{
\begin{aligned}
  &(-i\gamma^\mu \partial_\mu + M) \psi = A_\mu \gamma^\mu \psi,
  \\
  &\square A_\mu = \overline{\psi} \gamma_\mu \psi,
\end{aligned}
\right.
\end{equation}
on the Minkowski space-time $\R^{1+d}$ for space dimensions $d \le 3$. This fundamental model from relativistic field theory describes the interaction of an electron with its self-induced electromagnetic field. Our interest here is in the Cauchy problem with prescribed initial data at time $t=0$,
\begin{equation}\label{Data}
  \psi(0,x) = \psi_0(x), \quad A_\mu(0,x) = a_\mu(x), \quad \partial_t A_\mu(0,x) = b_\mu(x),
\end{equation}
and the question of local or global solvability, which has received some attention in recent years; see \cite{Chadam1973, Delgado1978, Bachelot2006, Huh2010, Okamoto2013, Zhang2014} for the case of one space dimension and \cite{Gross1966, Georgiev1991, Bournaveas1996, Esteban1996, Flato1997, Psarelli2005, Selberg2010b, Selberg2011, Gavrus2017} for higher dimensions, and the references therein.

The unknowns are the spinor field $\psi=\psi(t,x)$, taking values in $\C^N$ ($N=2$ for $d=1,2$; $N=4$ for $d=3$), and the real-valued potentials $A_\mu=A_\mu(t,x)$, $\mu=0,1,\dots,d$. $M \ge 0$ is the mass.

The equations are written in covariant form on $\R^{1+d} = \R_t \times \R_x^d$ with the Minkowski metric $(g^{\mu\nu}) = \mathrm{diag}(1,-1,\dots,-1)$ and coordinates $(x_\mu)$, where $x_0=t$ is the time and $x=(x_1,\dots,x_d)$ is the spatial position. Greek indices range over $0,1,\dots,d$, latin indices over $1,\dots,d$, and repeated upper and lower indices are implicitly summed over these ranges. We write $\partial_\mu = \frac{\partial}{\partial x_\mu}$, so $\partial_0=\partial_t$ is the time derivative, $\nabla = (\partial_1,\dots,\partial_d)$ is the spatial gradient, and $\square = \partial^\mu \partial_\mu = \partial_t^2-\Delta$ is the D'Alembertian. The $N \times N$ Dirac matrices $\gamma^\mu$ are required to satisfy
\begin{equation}\label{Matrices}
  \gamma^\mu \gamma^\nu + \gamma^\nu \gamma^\mu = 2 g^{\mu\nu} I,
  \qquad
  (\gamma^0)^* = \gamma^0, \qquad (\gamma^j)^* = - \gamma^j.
\end{equation}
We denote by $\psi^*$ the complex conjugate transpose, and write $\overline \psi = \psi^*\gamma^0$.

Key features of the Maxwell-Dirac system are the gauge invariance, the scaling invariance and the conservation laws, which we now recall.

Firstly, there is a $\mathrm{U}(1)$ gauge invariance
\[
  \psi \longrightarrow e^{i\chi}\psi, \qquad A_\mu \longrightarrow A_\mu + \partial_\mu \chi,
\]
for any real valued $\chi(t,x)$. This implies gauge freedom, allowing to specify a gauge condition on the potentials. The particular form \eqref{MD} of the Maxwell-Dirac system appears when the Lorenz gauge condition $\partial^\mu A_\mu = 0$ is imposed, that is,
\begin{equation}\label{Lorenz}
  \partial_t A_0 = \nabla \cdot \mathbf A,
\end{equation}
where $\mathbf A = (A_1,\dots,A_d)$. Since this gauge condition reduces to certain constraints on the data \eqref{Data}, we did not include it in \eqref{MD}. In addition to the obvious constraint, there is a constraint coming from the Gauss law (implied by \eqref{Lorenz} and the second equation in \eqref{MD})
\[
  \nabla \cdot \mathbf E = \abs{\psi}^2,
\]
where $\mathbf E = \nabla A_0 - \partial_t \mathbf A$ is the electric field. Thus, the data constraints are
\begin{equation}\label{Constraints}
  b_0 = \partial^j a_j, \qquad \partial^j (\partial_j a_0 - b_j) = \abs{\psi_0}^2.
\end{equation}
If these are satisfied (in some ball), then a solution of \eqref{MD}, \eqref{Data} will also satisfy the Lorenz gauge condition \eqref{Lorenz} (in the cone of dependence over the ball).

Secondly, the system is invariant under the rescaling, in the case $M=0$,
\[
  \psi(t,x) \longrightarrow \lambda^{3/2} \psi(\lambda t, \lambda x), \qquad A_\mu(t,x) \longrightarrow \lambda A_\mu(\lambda t, \lambda x) \qquad (\lambda > 0).
\]
For Sobolev data $(\psi_0,a_\mu,b_\mu) \in H^s(\R^d) \times H^r(\R^d) \times H^{r-1}(\R^d)$, the scale-invariant regularity (for the homogeneous Sobolev norms, to be precise) is $s=s_c(d)=\frac{d-3}{2}$ and $r=r_c(d)=\frac{d-2}{2}$. By the usual heuristic one does not expect well-posedness below this regularity.

Thirdly, we consider conservation laws. While the Maxwell-Dirac system does have a conserved energy, which is roughly speaking at the level of $H^{1/2}$ for the spinor, this energy does not have a definite sign, so it is difficult to see how to make use of it to prove global existence. On the other hand, one has the conservation of charge
\begin{equation}\label{Charge}
  \int_{\R^d} \abs{\psi(t,x)}^2 \, dx = \int_{\R^d} \abs{\psi(0,x)}^2 \, dx,
\end{equation}
which plays a key role in all the known global existence results for large data. We will refer to solutions at this regularity, that is, with $t \mapsto \psi(t,\cdot)$ a continuous map into $L^2(\R^d)$, as \emph{charge class solutions}. It should be noted that the charge regularity $s=0$ coincides with the scaling-critical regularity $s_c(d) = \frac{d-3}{2}$ when $d=3$. Thus, the Maxwell-Dirac system is charge-critical in three space dimensions, and charge-subcritical in dimensions $d=1,2$.

The first global result for \eqref{MD}, \eqref{Data} was obtained by Chadam \cite{Chadam1973}, in one space dimension, for data $(\psi_0,a_\mu,b_\mu) \in H^1(\R) \times H^1(\R) \times L^2(\R)$. Chadam first proved local existence and uniqueness, and was able to extend the solution globally by proving an a priori bound on the $H^1(\R) \times H^1(\R) \times L^2(\R)$ norm of the solution via a clever boot-strap argument making use of the conservation of charge \eqref{Charge}. But to be able to prove global existence with a more direct use of the conservation of charge, in any dimension, a natural strategy is to try to prove local existence of charge class solutions.\footnote{However, it should be noted that such a result does not immediately imply global existence via the conservation of charge, since one also needs a priori estimates for the potentials.}

We proceed to recall what is known about local and global well-posedness in the charge class.

Starting with one space dimension, we note that Bournaveas \cite{Bournaveas2000} proved global charge-class existence for the related Dirac-Klein-Gordon system, but the argument relies on a null structure in Dirac-Klein-Gordon which is not present in Maxwell-Dirac. Bachelot \cite{Bachelot2006} gave another proof that does not rely on null structure and applies also to Maxwell-Dirac; similar results have been obtained in \cite{Huh2010, Zhang2014, Selberg2018}.

In the charge-critical three space-dimensional case, local well-posedness remains an open question in the charge class, but has been proved almost down to that regularity by D'Ancona, Foschi and Selberg \cite{Selberg2010b}; see also \cite{Bournaveas1996,Gross1966} for earlier local results at higher regularity, and \cite{Georgiev1991,Flato1997,Psarelli2005} for small-data global results. The existence of stationary solutions was proved in \cite{Esteban1996}.

In two space dimensions, global well-posedness in the charge class was proved by D'Ancona and Selberg \cite{Selberg2011}.

To summarise, in the charge-subcritical dimensions $d=1,2$, there is global well-posedness in the charge class, and in the charge-critical dimension $d=3$, local well-posedness holds almost down to the charge regularity. Our aim here is to show that these results are sharp (or almost sharp, for $d=3$), by proving ill-posedness below the charge regularity. This result is somewhat surprising in the subcritical cases, and in particular for $d=1$. Indeed, it should be noted that in dimensions $d=2,3$, the proof of local existence at or near the charge regularity is quite involved and requires a subtle null structure that was uncovered in \cite{Selberg2010b}. By contrast, the proof in the case $d=1$ is elementary (see section \ref{WPproof}) and does not require this null structure. It was therefore expected that, by exploiting the latter, one should be able to go below the charge. But our result shows that this is not possible, and this means that the null structure is not helpful in the case $d=1$.

We remark that our proof of ill-posedness works also in dimensions $d \ge 4$, but then the critical regularity is above the charge, so this is not really of much interest. In dimensions $d \ge 4$, global existence and modified scattering for data with small scaling-critical norm has been proved in \cite{Gavrus2017}.

We now state our main results.

\section{Main results}

The following notation is used. For $1 \le p \le \infty$, $L^p(\R^d)$ denotes the standard Lebesgue space. For $s \in \R$, $H^s(\R^d)$ is the Sobolev space $(1-\Delta)^{s/2} L^2(\R^d)$. For an open set $U$ in $\R^d$ or $\R_t \times \R_x^d$, $\mathcal D'(U)$ is the space of distributions on $U$. We write
\begin{align*}
  X_0 &= \text{$H^s(\R^d)$ for some $s < 0$, or $L^p(\R^d)$ for some $1 \le p < 2$}.
  \\
  B &= \text{the open unit ball in $\R^d$, centred at the origin}.
  \\
  K &= \text{the cone of dependence over $B$}.
  \\
  K_T &= \text{$K \cap \left([0,T] \times \R^d \right)$, for $T > 0$}.
\end{align*}
Thus, $K = \{ (t,x) \in \R \times \R^d \colon 0 \le t < 1,\; \abs{x} < 1-t \}$. The interior of the truncated cone $K_T$ will be denoted $\mathrm{Int}(K_T)$.

We will use the following facts concerning $C^\infty$ solutions of \eqref{MD}, which follow from the general theory for semilinear wave equations. Assume we are given data \eqref{Data} belonging to $C^\infty(\R^d)$. Then there exists a corresponding $C^\infty$ solution $(\psi,A_\mu)$ of \eqref{MD} on an open subset $U$ of $[0,\infty) \times \R^d$ containing the Cauchy hypersurface $\{0\} \times \R^d$. Moreover, we may assume that $U$ is \emph{causal}, in the sense that for every point $(t,x)$ in $U$, the cone of dependence $K^{(t,x)}$, with vertex $(t,x)$ and base in $\{0\} \times \R^d$, is contained in $U$. The solution in the cone $K^{(t,x)}$ is uniquely determined by the data in the base of the cone. By the uniqueness, and since the union of two causal sets is again causal, there exists a maximal solution of the type described above, and we call this the \emph{maximal $C^\infty$ forward evolution} of the given data. 

In the first version of our ill-posedness result, we take vanishing data for the potentials.

\begin{theorem}[Ill-posedness I]\label{MainThm1}
In space dimensions $d \le 3$, the Cauchy problem \eqref{MD}, \eqref{Data} is ill posed for data
\[
  \psi_0 \in X_0, \qquad a_\mu = b_\mu = 0 \qquad (\mu=0,\dots,d).
\]
More precisely, there exists $\psi_0^{\mathrm{bad}} \in X_0 \setminus L^2(\R^d)$ such that for any $T > 0$ and any neighbourhood $\Omega_0$ of $\psi_0^{\mathrm{bad}}$ in $X_0$, there fails to exist a continuous map
\[
  S \colon \Omega_0 \longrightarrow \mathcal D'\left( \mathrm{Int}(K_T) \right),
  \qquad \psi_0 \longmapsto S[\psi_0] = (\psi,A_\mu),
\]
with the property that if $\psi_0 \in \Omega_0 \cap C_c^\infty(\R^d)$, then $S[\psi_0]$ is $C^\infty$ in $K_T$ and solves \eqref{MD} there, with intial data $\psi_0$ and $a_\mu=b_\mu=0$ in $B$.
\end{theorem}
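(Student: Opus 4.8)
The plan is to construct $\psi_0^{\mathrm{bad}}$ as a sum of highly concentrated, frequency-localised bumps, and to show that the quadratic interaction forced by \eqref{MD} produces a low-frequency (in fact, essentially time-independent) "resonant" contribution to the potential $A_\mu$ whose size cannot be controlled in terms of the $X_0$-norm of the datum. Concretely, I would take $\psi_0^{\mathrm{bad}}$ of the form $\sum_{k} c_k e^{i N_k x_1} \phi(x)$ (in $d=1$; analogously with a fixed Schwartz profile $\phi$ supported in $B$ for $d=2,3$), where $N_k \to \infty$ rapidly and the coefficients $c_k$ are chosen so that $\sum_k \abs{c_k}^2 = \infty$ (so $\psi_0^{\mathrm{bad}} \notin L^2$) while $\sum_k \abs{c_k}^2 N_k^{2s} < \infty$ for some $s<0$ and $\sum_k \abs{c_k} \norm{\phi}_{L^p} < \infty$ for $1 \le p < 2$ (so $\psi_0^{\mathrm{bad}} \in X_0$). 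The point is that $\overline{\psi}\gamma_\mu\psi$ contains the diagonal terms $\abs{c_k}^2 \overline{\phi}\gamma_\mu\phi$, which add up to something with infinite mass, and through $\square A_\mu = \overline{\psi}\gamma_\mu\psi$ this drives $A_\mu$ away from any distributional limit.

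The key steps, in order, are: (1) For smooth compactly supported data, set up the Duhamel/parametrix representation of the $C^\infty$ solution in the cone $K_T$, isolating the first Picard iterate $A_\mu^{(1)} = \square^{-1}(\overline{\psi_0 e^{it\cdot}}\gamma_\mu \psi_0 e^{it\cdot})$ (free Dirac evolution of the datum) and estimating the remainder. (2) Compute the leading resonant part of $A_\mu^{(1)}$ for a single bump $c_k e^{iN_k x_1}\phi$: the self-interaction $\abs{c_k}^2 \overline{\phi}\gamma_\mu\phi$ is frequency-localised near the origin and independent of $N_k$, so $\square^{-1}$ of it grows linearly in $t$ with a coefficient $\sim \abs{c_k}^2$ times a fixed nonzero constant (nonvanishing because $\overline{\phi}\gamma_0\phi = \abs{\phi}^2 \ge 0$ and is not identically zero — this uses the structure \eqref{Matrices} of the Dirac matrices, in particular $(\gamma^0)^*=\gamma^0$). (3) Show that for a finite partial sum $\psi_0^{(n)} = \sum_{k\le n} c_k e^{iN_k x_1}\phi$, the resonant contributions from distinct $k$ do not cancel — the cross terms oscillate at nonzero frequency $N_k - N_j$ and, after $\square^{-1}$, remain bounded (as distributions, uniformly in $n$) by a convergent series, via stationary phase / non-stationary phase — while the diagonal terms contribute $\big(\sum_{k\le n}\abs{c_k}^2\big)$ times a fixed $L^1_{\mathrm{loc}}$ function of fixed sign, hence blow up as $n\to\infty$. (4) Conclude: since $\psi_0^{(n)} \to \psi_0^{\mathrm{bad}}$ in $X_0$ while $A_\mu(S[\psi_0^{(n)}]) = A_\mu^{(1)} + (\text{controlled remainder})$ has no limit in $\mathcal D'(\mathrm{Int}(K_T))$ — because $A_0$ tested against a fixed positive bump $\chi \in C_c^\infty(\mathrm{Int}(K_T))$ diverges — no continuous extension $S$ to any $X_0$-neighbourhood $\Omega_0$ of $\psi_0^{\mathrm{bad}}$ can exist.

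The main obstacle I expect is step (3): controlling the higher Picard iterates and the cross terms uniformly as the frequencies $N_k$ separate and the partial sums grow. One must verify that the nonlinear remainder $A_\mu - A_\mu^{(1)}$ and the off-diagonal part of $A_\mu^{(1)}$ stay bounded in $\mathcal D'(\mathrm{Int}(K_T))$ along the sequence $\psi_0^{(n)}$, so that the divergence genuinely comes from the resonant diagonal term and cannot be cancelled. This requires choosing the lacunary sequence $N_k$ growing fast enough (relative to the decay of $c_k$) that all non-resonant frequency interactions are summably small by repeated integration by parts in the oscillatory integrals, while simultaneously keeping $\sum\abs{c_k}^2 = \infty$; balancing these is the delicate point, but it is achievable since $X_0$ membership only constrains weighted $\ell^2$ or $\ell^1$ sums of the $c_k$, leaving ample room to let $N_k$ grow arbitrarily fast.
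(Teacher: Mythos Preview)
Your approach is genuinely different from the paper's, and step (3) has a real gap. The issue is not the off-diagonal oscillatory terms in the first Picard iterate but the full nonlinear remainder $A_\mu - A_\mu^{(1)}$. Along your sequence $\psi_0^{(n)}$ the $L^2$ norm blows up by design, so you are leaving any regime in which a Picard expansion is controlled. Already at the next order one meets terms of the schematic form $\square^{-1}\bigl(\overline{\psi_{\mathrm{free}}}\,\gamma_\mu\,(A^{(1)}\psi_{\mathrm{free}})\bigr)$, and the low-frequency resonant part of $A^{(1)}$ that drives your divergence, of size $\sum_{k\le n}|c_k|^2$, multiplies another low-frequency bilinear piece, producing a contribution of order $\bigl(\sum_{k\le n}|c_k|^2\bigr)^2$. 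Lacunarity of the $N_k$ buys nothing here, since this interaction sits at frequency $\approx 0$ throughout; so the remainder is not bounded and could in principle cancel, or swamp, the diagonal divergence you isolated. One might try to bypass this via positivity ($\square A_0 = |\psi|^2 \ge 0$) together with charge conservation, but that only gives $\int|\psi(t)|^2\,dx \to \infty$; to conclude $A_0(t,x)\to\infty$ from d'Alembert's formula you need a lower bound on $|\psi|^2$ \emph{inside a fixed backward cone}, i.e.\ you must rule out that the mass escapes, and your framework does not supply that.

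The paper avoids any perturbative expansion. It takes $\psi_0^{\mathrm{bad}}$ to depend only on $x_1$ --- a cutoff of $(|x_1|^{-1/2},0,\dots,0)^\top$, regularised to $f_\varepsilon(x_1)=(\varepsilon^2+x_1^2)^{-1/4}$ --- so that, by finite speed of propagation, the solution inside $K$ depends only on $(t,x_1)$ and one is effectively solving a $1{+}1$-dimensional system. That system is shown (Theorem~\ref{WPThm}) to be globally well-posed in the charge class by an elementary iteration, so the approximating smooth solutions exist throughout $K$. Then the transport equations for $|u_\varepsilon|^2$, $|v_\varepsilon|^2$ along characteristics, combined with a null-form estimate (Lemma~\ref{NullLemma}) that keeps $A_{2,\varepsilon},A_{3,\varepsilon}$ uniformly bounded, yield the pointwise lower bound $|\psi_\varepsilon(t,x)|^2\ge\tfrac12 f_\varepsilon(x_1-t)^2$ in a sector of $K_T$; inserting this into d'Alembert's formula for $A_{0,\varepsilon}$ gives $A_{0,\varepsilon}\gtrsim|\log\varepsilon|\to\infty$ on compacta, which immediately rules out any continuous solution map. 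The argument is fully nonlinear from the start and never compares the solution to a free evolution.
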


This result applies to \eqref{MD}, \eqref{Data} without regard to the data constraints \eqref{Constraints}, which of course are not compatible with the assumption $a_\mu=b_\mu=0$. We next state an alternative version of the result, which allows to take into account the constraints. In fact, Theorem \ref{MainThm1} is an immediate consequence of the following more precise result.

\begin{theorem}[Ill-posedness II]\label{MainThm2}
Let $d \le 3$. There exist $\psi_0^{\mathrm{bad}} \in X_0 \setminus L^2(\R^d)$ and $\psi_{0,\varepsilon}, a_{\mu,\varepsilon}, b_{\mu,\varepsilon} \in C_c^\infty(\R^d)$ for each $\varepsilon > 0$, such that
\begin{itemize} 
\item[(i)] $\psi_{0,\varepsilon} \to \psi_0^{\mathrm{bad}}$ in $X_0$ as $\varepsilon \to 0$.
\item[(ii)] The maximal $C^\infty$ forward evolution $(\psi_\varepsilon,A_{\mu,\varepsilon})$ of the data $(\psi_{0,\varepsilon}, a_{\mu,\varepsilon}, b_{\mu,\varepsilon})$ exists throughout the cone $K$.
\item[(iii)] There exists $T > 0$ such that, as $\varepsilon \to 0$, $A_{0,\varepsilon}(t,x) \to \infty$ uniformly in any compact subset of $K_T \cap \{ (t,x) \colon \abs{x} < t \}$.
\end{itemize}
Moreover, we can choose the $a_{\mu,\varepsilon}, b_{\mu,\varepsilon}$ so that either
\begin{equation}\label{ZeroData}
  a_{\mu,\varepsilon} = b_{\mu,\varepsilon} = 0 \qquad \text{for $\mu = 0,\dots,d$},
\end{equation}
or
\begin{equation}\label{ConstrainedData}
  b_{0,\varepsilon} = \sum_{j=1}^d \partial_j a_{j,\varepsilon},
  \qquad
  \sum_{j=1}^d \partial_j \left( \partial_j a_{0,\varepsilon} - b_{j,\varepsilon} \right) = \abs{\psi_{0,\varepsilon}}^2 \qquad \text{in $B$}.
\end{equation}
\end{theorem}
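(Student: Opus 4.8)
The plan is to construct $\psi_0^{\mathrm{bad}}$ and the approximating data explicitly, and to exploit the fact that $A_0$ is driven by a wave equation whose source is $\overline\psi\gamma_0\psi = \abs{\psi}^2 \ge 0$ (for $\mu=0$, since $\gamma_0=\gamma^0$ and $\overline\psi\gamma^0\psi = \psi^*\psi = \abs{\psi}^2$). The positivity of this source is the key structural fact: it means the contribution to $A_0$ from the spinor cannot cancel, so if the $L^2$ mass of the spinor near the origin blows up along the approximating sequence, then $A_0$ must blow up in the forward light cone. Concretely, I would pick $\psi_0^{\mathrm{bad}}$ to be a fixed profile that is in $H^s$ for $s<0$ and in $L^p$ for $p<2$ but has a non-$L^2$ singularity at the origin — for instance something like $\psi_0^{\mathrm{bad}}(x) = \chi(x)\abs{x}^{-d/2}(\log(1/\abs x))^{-\gamma} v$ for a suitable cutoff $\chi$, exponent $\gamma$, and constant spinor $v$ — chosen so that $\int_{\abs x < r}\abs{\psi_0^{\mathrm{bad}}}^2\,dx$ diverges as a function of the radius in a controlled (logarithmic) way. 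Then $\psi_{0,\varepsilon}$ is a smooth truncation/mollification of $\psi_0^{\mathrm{bad}}$ at scale $\varepsilon$, which converges to $\psi_0^{\mathrm{bad}}$ in $X_0$ (this is where $s<0$ and $p<2$ are used — truncation does not converge in $L^2$, precisely because the datum is not in $L^2$, but it does in the weaker norms), giving (i).

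For (ii), the local $C^\infty$ theory recalled in the excerpt gives a maximal causal $C^\infty$ forward evolution of each smooth compactly supported datum; I must check it persists throughout the full cone $K$ (height one over the unit ball). Here I would use the finite speed of propagation together with the conservation of charge \eqref{Charge}: within $K$ the relevant spinor data live in the unit ball $B$, and by a bootstrap/continuation argument — the one alluded to as "elementary" for $d=1$, and which works here in the small region $K$ uniformly in $\varepsilon$ — the solution does not blow up before time $1$ inside the cone. Alternatively, and more robustly, I would observe that we only need the solution in $K$, and design $\psi_{0,\varepsilon}$ so that its restriction to $B$ has uniformly bounded $L^2$ mass is *not* what we want; rather we want the mass to grow, so instead I rely on the fact that the known local-in-time $C^\infty$ persistence only needs $C^\infty$ norms (not charge) and that on the fixed compact time interval relevant to $K$ the smooth solution exists — with the understanding that its smooth norm may blow up as $\varepsilon\to 0$, which is exactly the point.

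For (iii), write $A_{0,\varepsilon} = A_{0,\varepsilon}^{\mathrm{free}} + A_{0,\varepsilon}^{\mathrm{Duh}}$, where $A_{0,\varepsilon}^{\mathrm{free}}$ solves $\square A^{\mathrm{free}} = 0$ with data $(a_{0,\varepsilon},b_{0,\varepsilon})$ and $A_{0,\varepsilon}^{\mathrm{Duh}}$ is the Duhamel term with source $\abs{\psi_\varepsilon}^2$. In the zero-data case \eqref{ZeroData} the free part vanishes; in the constrained case \eqref{ConstrainedData} one chooses $a_{\mu,\varepsilon},b_{\mu,\varepsilon}$ smooth, compactly supported, and uniformly bounded (independently of $\varepsilon$) solving the constraints in $B$ — the constraint is a divergence equation that can be solved with bounds controlled by $\fixednorm{\abs{\psi_{0,\varepsilon}}^2}$ in a negative Sobolev norm, hence uniformly — so the free part is harmless. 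For the Duhamel term, the fundamental solution of $\square$ in dimensions $d\le 3$ is a nonnegative measure supported on the backward light cone (for $d=1$ it is $\tfrac12 \mathbf 1_{\{\abs{x-y}\le t-s\}}$; for $d=3$ it is surface measure on $\{\abs{x-y}=t-s\}$ divided by $4\pi(t-s)$; $d=2$ is the disc kernel), so
\[
  A_{0,\varepsilon}^{\mathrm{Duh}}(t,x) = \int_0^t \!\!\int_{\R^d} E_d(t-s,x-y)\,\abs{\psi_\varepsilon(s,y)}^2 \,dy\,ds \;\ge\; 0.
\]
Restricting the $s$-integral to a short interval $[0,\tau]$ and using continuity in time of $\psi_\varepsilon$ together with the charge identity, the mass $\int \abs{\psi_\varepsilon(s,y)}^2\,dy$ near the origin is, for small $s$, comparable to $\int_{\abs y < \varepsilon'}\abs{\psi_{0,\varepsilon}}^2\,dy$ which diverges as $\varepsilon\to0$ by construction; pairing this against the positive kernel $E_d$ evaluated at points $(t,x)$ with $\abs x < t$ (so that the light cone from $(t,x)$ genuinely reaches a neighbourhood of the spatial origin at small times) gives a lower bound on $A_{0,\varepsilon}^{\mathrm{Duh}}(t,x)$ that tends to $+\infty$, uniformly on compact subsets of $K_T\cap\{\abs x<t\}$ for $T$ small. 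Finally, Theorem \ref{MainThm1} follows: if a continuous solution map $S$ existed on a neighbourhood $\Omega_0$, then since $\psi_{0,\varepsilon}\to\psi_0^{\mathrm{bad}}$ in $X_0$ and the $\psi_{0,\varepsilon}$ are eventually in $\Omega_0\cap C_c^\infty$, continuity of $S$ into $\mathcal D'(\mathrm{Int}(K_T))$ would force $A_{0,\varepsilon}=S[\psi_{0,\varepsilon}]$ (the $A_0$ component) to converge in $\mathcal D'$, contradicting the pointwise blow-up to $+\infty$ on an open set.

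The main obstacle I anticipate is (iii)'s quantitative lower bound: I need the blow-up of $A_{0,\varepsilon}$ to be genuinely uniform on a fixed compact set, which requires (a) a lower bound, uniform in small $s$, on the spatial $L^2$ mass of $\psi_\varepsilon(s,\cdot)$ concentrated in a small ball — this needs either finite propagation speed plus a local charge/energy argument, or a direct continuity estimate showing $\psi_\varepsilon$ stays close to $\psi_{0,\varepsilon}$ for short time in a topology strong enough to preserve the concentrated mass — and (b) care that the logarithmic divergence rate of the truncated mass is not killed by the extra $s$-integration and by the kernel weights (the $1/(t-s)$ in $d=3$, etc.). Balancing the truncation scale $\varepsilon$, the time window $\tau$, and the divergence exponent $\gamma$ so that the product still diverges is the delicate bookkeeping; everything else is soft.
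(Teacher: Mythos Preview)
Your proposal has the right broad idea---exploit positivity of the source $\abs{\psi}^2$ in $\square A_0 = \abs{\psi}^2$ and the divergence of the approximating charge---but it misses the structural device that makes the paper's proof work, and this leads to genuine gaps in (ii) and (iii).

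\textbf{The key device you are missing is the reduction to one space dimension.} The paper takes $\psi_0^{\mathrm{bad}}$ to depend only on $x_1$ in $B$ (specifically, $\chi(x_1)\cdots\chi(x_d)\,\abs{x_1}^{-1/2}$ in the first spinor slot). Then, by finite speed of propagation, the solution in $K$ depends only on $(t,x_1)$, and the system reduces to the $1{+}1$-dimensional system \eqref{MD'}. This buys two things you do not have:
\begin{itemize}
\item[(a)] \emph{Existence throughout $K$.} The paper proves a global charge-class well-posedness theorem for \eqref{MD'} (Theorem~\ref{WPThm}); since the $x_1$-restricted data are smooth, this immediately gives (ii). Your argument for (ii) is not an argument: ``the smooth solution exists on the fixed compact time interval'' is precisely what must be proved, and for generic $d$-dimensional smooth data with $L^2$ norm blowing up there is no reason the maximal existence time stays bounded below by $1$ as $\varepsilon\to 0$.
\item[(b)] \emph{A pointwise lower bound on $\abs{\psi_\varepsilon}^2$.} In one dimension the transport structure $(\partial_t\pm\partial_x)$ allows explicit integration along characteristics. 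The paper first uses a null-form estimate (Lemma~\ref{NullLemma}) to show $A_{2,\varepsilon},A_{3,\varepsilon}$ stay uniformly bounded in $K_T$, then bootstraps on the equations $(\partial_t+\partial_x)\abs{u_\varepsilon}^2 = O(\abs{u_\varepsilon}\abs{v_\varepsilon})$ to get $\abs{\psi_\varepsilon(t,x)}^2 \ge \tfrac12\abs{f_\varepsilon(x_1-t)}^2$ in a wedge. Your proposed route---``continuity in time shows $\psi_\varepsilon(s,\cdot)$ stays close to $\psi_{0,\varepsilon}$''---would need this continuity to be uniform in $\varepsilon$, which is exactly what fails when the data concentrate. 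You correctly flag this as the main obstacle, but you have no mechanism to overcome it; the paper's mechanism is the $1$D reduction.
\end{itemize}

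\textbf{A secondary error concerns the constrained data.} You assert one can choose $a_{\mu,\varepsilon},b_{\mu,\varepsilon}$ satisfying \eqref{ConstrainedData} and \emph{uniformly bounded} in $\varepsilon$. This is false in general and is in fact the content of Theorem~\ref{MainThm3}: since $\abs{\psi_{0,\varepsilon}}^2$ does not converge in $\mathcal D'(B)$, no continuous assignment of constraint-satisfying data can exist. The paper does not need uniform boundedness of $b_{1,\varepsilon}$; what matters is that one can arrange $a_{0,\varepsilon}=b_{0,\varepsilon}=0$, so the free part of $A_{0,\varepsilon}$ vanishes and the Duhamel lower bound via the $1$D d'Alembert formula suffices.
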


Here, if we choose the alternative \eqref{ConstrainedData}, then $a_{\mu,\varepsilon}, b_{\mu,\varepsilon}$ do not have limits in the sense of distributions on $B$ as $\varepsilon \to 0$. This is not a deficiency of our construction, but is necessarily so, as our next result shows. The following theorem essentially says that the Gauss law for the initial data is ill posed when we are below the charge regularity.

\begin{theorem}[Ill-posedness of constraints]\label{MainThm3}
There exists $\psi_0^{\mathrm{bad}} \in X_0 \setminus L^2(\R^d)$ such that for any neighbourhood $\Omega_0$ of $\psi_0^{\mathrm{bad}}$ in $X_0$, there do not exist continuous maps
\[
  I_\mu, J_\mu \colon \Omega_0 \longrightarrow \mathcal D'(B)
\]
with the property that if $\psi_0 \in \Omega_0 \cap C_c^\infty(\R^d)$, then
\[
  a_\mu := I_\mu[\psi_0], \qquad b_\mu := J_\mu[\psi_0] \qquad (\mu=0,\dots,d)
\]
satisfy the constraint equations \eqref{Constraints} in $B$.
\end{theorem}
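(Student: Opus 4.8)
The plan is to argue by contradiction, extracting the obstruction purely from the second constraint equation in \eqref{Constraints}, namely the Gauss law $\partial^j(\partial_j a_0 - b_j) = \abs{\psi_0}^2$. Suppose such continuous maps $I_\mu, J_\mu \colon \Omega_0 \to \mathcal D'(B)$ exist. Set $u[\psi_0] := \partial^j(\partial_j a_0 - b_j) = \Delta I_0[\psi_0] - \sum_j \partial_j J_j[\psi_0]$; this is a composition of the continuous maps $I_0, J_j$ with the (continuous) differential operators $\Delta$ and $\partial_j$ on $\mathcal D'(B)$, hence $u \colon \Omega_0 \to \mathcal D'(B)$ is continuous, and it agrees with $\psi_0 \mapsto \abs{\psi_0}^2$ on the dense subset $\Omega_0 \cap C_c^\infty(\R^d)$. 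The contradiction will come from choosing $\psi_0^{\mathrm{bad}} \in X_0 \setminus L^2$ and an approximating sequence $\psi_{0,\varepsilon} \in C_c^\infty$ with $\psi_{0,\varepsilon} \to \psi_0^{\mathrm{bad}}$ in $X_0$ but $\abs{\psi_{0,\varepsilon}}^2 \to \infty$ in $\mathcal D'(B)$ (e.g. $\int_{B} \abs{\psi_{0,\varepsilon}}^2 \phi \to \infty$ for some fixed nonnegative test function $\phi$ that is positive near the origin), which is exactly what continuity of $u$ forbids.

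The construction of $\psi_0^{\mathrm{bad}}$ is the same one used for Theorems 1 and 2, so I would just recall it: a datum concentrated at the origin whose profile fails to be in $L^2$ while lying in every $H^s$, $s<0$, and in $L^p$, $1\le p<2$. Concretely one takes something like $\psi_0^{\mathrm{bad}}(x) = \abs{x}^{-\alpha} \chi(x)$ (times a fixed unit spinor), with $\chi \in C_c^\infty$ equal to $1$ near $0$, and $\alpha = d/2$ chosen to sit exactly at the borderline of $L^2$: then $\abs{\psi_0^{\mathrm{bad}}}^2 \sim \abs{x}^{-d}$ near the origin, which is not locally integrable, so $\abs{\psi_0^{\mathrm{bad}}}^2$ is not even a locally finite measure, while $\psi_0^{\mathrm{bad}} \in H^s$ for $s<0$ and $\psi_0^{\mathrm{bad}} \in L^p$ for $p<2$. (A logarithmic correction can be inserted if one wants to land precisely in these spaces; this is a routine matter and presumably already fixed earlier in the paper.) One then mollifies, $\psi_{0,\varepsilon} = \rho_\varepsilon * \psi_0^{\mathrm{bad}}$ (cut off to have compact support), to get the smooth approximants with $\psi_{0,\varepsilon} \to \psi_0^{\mathrm{bad}}$ in $X_0$, and checks that $\int_B \abs{\psi_{0,\varepsilon}}^2\,dx \to \infty$ — this is immediate from the fact that $\abs{\psi_0^{\mathrm{bad}}}^2 \notin L^1_{\mathrm{loc}}$ together with, say, Fatou's lemma applied along a subsequence where $\psi_{0,\varepsilon} \to \psi_0^{\mathrm{bad}}$ a.e.

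Assembling the pieces: $u(\psi_{0,\varepsilon}) = \abs{\psi_{0,\varepsilon}}^2$ in $\mathcal D'(B)$, and pairing against a fixed $0 \le \phi \in C_c^\infty(B)$ with $\phi \equiv 1$ on a small ball around the origin gives $\langle u(\psi_{0,\varepsilon}), \phi\rangle \ge \int_{\{\phi = 1\}} \abs{\psi_{0,\varepsilon}}^2 \to \infty$, whereas continuity of $u$ at $\psi_0^{\mathrm{bad}}$ forces $\langle u(\psi_{0,\varepsilon}), \phi\rangle \to \langle u(\psi_0^{\mathrm{bad}}), \phi\rangle$, a finite number. This is the desired contradiction, and it holds for every neighbourhood $\Omega_0$ since the sequence eventually enters any such $\Omega_0$. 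I expect the only genuinely delicate point to be bookkeeping around the spaces $X_0$: one must confirm that the single datum $\psi_0^{\mathrm{bad}}$ works simultaneously for the $H^s$ scale ($s<0$) and the $L^p$ scale ($p<2$), and that the mollified approximants converge in whichever $X_0$ is under consideration — but since convergence $\rho_\varepsilon * f \to f$ holds in $H^s$ and in $L^p$ for $p<\infty$, and the blow-up of the $L^2$ mass is independent of which norm we use to measure approximation, there is no real obstacle, only care. Everything else is soft functional analysis: continuity of linear differential operators on $\mathcal D'(B)$ and of compositions of continuous maps.
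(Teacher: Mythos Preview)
Your argument is correct and is essentially the paper's own proof: argue by contradiction, observe that continuity of $I_\mu,J_\mu$ forces the left side of the Gauss constraint to converge in $\mathcal D'(B)$ along the approximating sequence, and derive a contradiction from the fact that $\abs{\psi_{0,\varepsilon}}^2$ has no distributional limit. The only cosmetic difference is that the paper reuses its explicit one-dimensional datum $\chi(x_1)\cdots\chi(x_d)\,\abs{x_1}^{-1/2}$ with the regularization $(\varepsilon^2+x_1^2)^{-1/4}$ (so the divergence of $\abs{\psi_{0,\varepsilon}}^2 = (\varepsilon^2+x_1^2)^{-1/2}$ in $\mathcal D'(B)$ is read off directly), rather than a radial $\abs{x}^{-d/2}$ profile with mollification and Fatou; since you already say you would take the datum from Theorems~1 and~2, your proof aligns with the paper's.
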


We conclude this section with a brief outline of the key steps in the proof of Theorem \ref{MainThm2}.

\medskip
\emph{Step 1.} We prove global well-posedness in the charge class for \eqref{MD}, \eqref{Data} in the case where the data only depend on a single coordinate, say $x_1$.

\medskip
\emph{Step 2.} To define the data $\psi_{0,\varepsilon}, a_{\mu,\varepsilon}, b_{\mu,\varepsilon} \in C_c^\infty(\R^d)$, we start with functions of $x_1$ and cut off smoothly outside the unit ball $B$. The corresponding maximal $C^\infty$ forward evolution $(\psi_\varepsilon,A_{\mu,\varepsilon})$ exists in the entire cone $K$, by Step 1, and depends only on $t$ and $x_1$ there.

\medskip
\emph{Step 3.} Using a null form estimate and a boot-strap argument we prove that there exists $T > 0$ such that $A_{j,\varepsilon}$, $j=2,3$, are uniformly bounded in $K_T$. A further boot-strap argument then yields a lower bound on $\abs{\psi_\varepsilon}$ in
$
  K_T \cap \{ (t,x) \colon 0 < t < x_1 \}.
$

\medskip
\emph{Step 4.}
Letting $\varepsilon \to 0$, we show that $A_{0,\varepsilon}(t,x) \to \infty$ uniformly in any compact subset of
$
  K_T \cap \{ (t,x) \colon \abs{x} < t \},
$
completing the proof of Theorem \ref{MainThm2}.
In fact, we prove this in the larger set $K_T \cap \{ (t,x) \colon \abs{x_1} < t \}$.
\medskip

The remainder of this paper is organised as follows. In section \ref{WPsection} we state the well-posedness result (Step 1), whose elementary proof is deferred until section \ref{WPproof}. In section \ref{MatricesSection}, we choose a particular representation of the Dirac matrices in dimensions $d \le 3$, write out the Maxwell-Dirac system in terms of the components of the spinor, and prove a null form estimate in one space dimension. In section \ref{DataSection} we specify the data (Step 2), and section \ref{IPsection} contains the proof of ill-posedness (Steps 3 and 4).

\section{Well-posedness for one-dimensional data}\label{WPsection}

We start by stating the result described in Step 1, the well-posedness in the case where the data only depend on the single coordinate $x_1$:
\begin{equation}\label{1dData}
  \psi(0,x) = \psi_0(x_1), \quad A_\mu(0,x) = a_\mu(x_1), \quad \partial_t A_\mu(0,x) = b_\mu(x_1).
\end{equation}
Then the solution of \eqref{MD} will depend only on $t$ and $x_1$. Indeed, if $(\psi,A_\mu)$ does not depend on $x_2,\dots,x_d$, then \eqref{MD} is equivalent to
\begin{equation}\label{MD'}
\left\{
\begin{aligned}
  (- i\gamma^0 \partial_t - i\gamma^1 \partial_1 + M) \psi &= \left( A_0 \gamma^0 + A_1 \gamma^1 + \dots + A_d \gamma^d \right) \psi,
  \\
  (\partial_t^2 - \partial_1^2) A_0 &= \psi^*  \psi,
  \\
  (\partial_t^2 - \partial_1^2) A_1 &= - \psi^* \gamma^0 \gamma^1 \psi,
  \\
  &\ \,\vdots
  \\
  (\partial_t^2 - \partial_1^2) A_d &= - \psi^* \gamma^0 \gamma^d \psi,
\end{aligned}
\right.
\end{equation}
and this is the system we will solve, with the initial condition \eqref{1dData}.

There is conservation of charge, for sufficiently regular solutions:
\begin{equation}\label{1dCharge}
  \int_{\R} \abs{\psi(t,x_1)}^2 \, dx_1 = \int_{\R^d} \abs{\psi(0,x_1)}^2 \, dx_1.
\end{equation}
Indeed, premultiplying the Dirac equation in \eqref{MD'} by $i\overline\psi=i\psi^*\gamma^0$, taking real parts, and using the fact that $M$ and the $A_\mu$ are real, and that $\gamma^0$ and $\gamma^0\gamma^j$ are hermitian, one obtains the conservation law $\partial_t \rho + \partial_1 j = 0$, where $\rho = \psi^*\psi = \abs{\psi}^2$ and $j = \psi^*\gamma^0\gamma^1\psi$. Integration then gives \eqref{1dCharge}.

We now state the global well-posedness result in the charge class. The $a_\mu$, $\mu=0,\dots,d$, will be taken in the space $AC(\R)$ with norm
\[
  \norm{f}_{AC(\R)} = \norm{f}_{L^\infty(\R)} + \norm{f'}_{L^1(\R)}.
\]
Thus, $AC(\R)$ is the space of absolutely continuous functions $f \colon \R \to \C$ with bounded variation (cf.\ Corollary 3.33 in \cite{Folland1999}), and $AC_{\mathrm{loc}}(\R)$ is the space of locally absolutely continuous functions.

\begin{theorem}\label{WPThm}
In any space dimension $d$, the Maxwell-Dirac system \eqref{MD} is globally well-posed for one-dimensional data \eqref{1dData} with the regularity
\[
  (\psi_0,a,b) \in \mathfrak X_0 := L^2(\R;\C^N) \times AC(\R;\R^{d+1}) \times L^1(\R;\R^{d+1}),
\]
where $a = (a_0,\dots,a_d)$ and $b=(b_0,\dots,b_d)$. That is, for any $T > 0$, there is a unique solution
\[
  (\psi,A,\partial_t A) \in C([0,T];\mathfrak X_0), \qquad A=(A_0,\dots,A_d),
\]
depending only on $t$ and $x_1$. The solution has the following properties:
\begin{itemize}
\item[(i)] The data-to-solution map is continuous from $\mathfrak X_0$ to $C([0,T];\mathfrak X_0)$.
\item[(ii)] Higher regularity persists. That is, if $J \in \mathbb N$ and $\partial_1^j (\psi_0,a_\mu,b_\mu) \in \mathfrak X_0$ for $j \le J$, then $\partial_t^j \partial_1^k (\psi,A_\mu,\partial_t A_\mu) \in C([0,T];\mathfrak X_0)$ for $j+k \le J$.
\item[(iii)] If the data are $C^\infty$, then so is the solution.
\item[(iv)] The conservation of charge \eqref{1dCharge} holds.
\item[(v)] If the data constraints \eqref{Constraints} are satisfied for $x_1$ in an interval $I$, then the Lorenz gauge condition $\partial_t A_0 = \partial_1 A_1$ is satisfied in the cone of dependence over $I$.
\end{itemize}
\end{theorem}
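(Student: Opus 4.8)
The plan is to reduce the system \eqref{MD'} with data \eqref{1dData} to an integral equation on light-cone (null) coordinates and solve it by a contraction mapping argument, using the conservation of charge to globalize. First I would introduce null coordinates $u = t + x_1$, $v = t - x_1$, in which $\partial_t^2 - \partial_1^2 = 4\,\partial_u\partial_v$, so that the wave operator in the $A_\mu$-equations is diagonalized: integrating twice along null lines, $A_\mu(t,x_1)$ is recovered from the data $(a_\mu, b_\mu)$ by the d'Alembert formula plus a double integral of $\psi^*\Gamma_\mu\psi$ over the backward null rectangle, where $\Gamma_0 = I$ and $\Gamma_j = -\gamma^0\gamma^j$. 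The key gain here is that, for data of one-dimensional type, the forcing terms $\psi^*\Gamma_\mu\psi$ are in $L^1_{x_1}$ uniformly in $t$ by \eqref{1dCharge}, so that $A_\mu(t,\cdot) \in AC(\R)$ with $\partial_t A_\mu(t,\cdot) \in L^1(\R)$, and the map $\psi \mapsto A_\mu$ is Lipschitz from $C([0,T];L^2)$ into $C([0,T];AC(\R))$ on bounded sets, with a bound that is \emph{linear} in $T$ times the charge (which is conserved). For the Dirac equation, I would likewise pass to the characteristic form: writing $\psi = (\psi_+,\psi_-)$ in a basis diagonalizing $\gamma^0\gamma^1$ (this is the particular representation to be fixed in section \ref{MatricesSection}), the principal part becomes a pair of transport operators $\partial_t \pm \partial_1$, so $\psi$ satisfies a Duhamel formula in which each component is transported along its own characteristic and the right-hand side is $(A_\mu\gamma^\mu - M\gamma^0)\psi$, an $L^\infty_{t,x_1}$-coefficient (in fact $AC$ in $x_1$) times $\psi$.

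Next I would set up the fixed-point map $\Phi(\psi) = $ [Duhamel solution of the Dirac equation with potential determined from $\psi$ as above] on the space $C([0,T];L^2(\R;\C^N))$. The transport semigroups are isometries on $L^2_{x_1}$, the potential term obeys $\norm{A_\mu(t,\cdot)}_{L^\infty} \lesssim \norm{a}_{AC} + T\,\norm{b}_{L^1} + T\,(\text{charge})$, and $M$ is a fixed constant; hence for $\psi_1,\psi_2$ in a ball one gets $\norm{\Phi(\psi_1)-\Phi(\psi_2)}_{C([0,T];L^2)} \le C(T)\,\norm{\psi_1-\psi_2}_{C([0,T];L^2)}$ with $C(T)\to 0$ as $T\to 0$ — so there is a unique local solution on $[0,T_0]$ with $T_0$ depending only on the norms of the data. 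Then I would invoke the conservation of charge \eqref{1dCharge} (already derived in the excerpt from the hermiticity of $\gamma^0$ and $\gamma^0\gamma^j$ and reality of $A_\mu$, valid for the regularized problem and passed to the limit), which keeps $\norm{\psi(t,\cdot)}_{L^2}$ fixed; since the lifespan $T_0$ depends only on this conserved quantity (and on the fixed data $a,b$, whose contribution to $\norm{A_\mu}_{L^\infty}$ on $[0,T]$ grows only linearly in $T$ and does not blow up), the local solution extends to any $[0,T]$. Uniqueness on the whole interval follows by the same Lipschitz estimate via a Gronwall/continuation argument. Continuity of the data-to-solution map, property (i), follows from the Lipschitz bounds applied to differences of solutions, together with the $AC \to AC$ and $L^1\to L^1$ dependence of $A_\mu$, $\partial_t A_\mu$ on the data; persistence of higher regularity, property (ii), comes from differentiating the Duhamel/d'Alembert formulas in $x_1$ and running the same contraction in $H^J$-type spaces (the commutator of $\partial_1$ with the transport operators vanishes, and $\partial_1$ hitting $A_\mu\psi$ produces $(\partial_1 A_\mu)\psi + A_\mu\partial_1\psi$ with $\partial_1 A_\mu \in L^1_{x_1}$ bounded by lower-order norms); property (iii) is the limiting case $J = \infty$ combined with Sobolev embedding in $x_1$ and the transport structure in $t$; property (iv) has already been established in the discussion preceding the theorem; and property (v) follows by checking that $\partial_t A_0 - \partial_1 A_1$ solves a homogeneous wave equation in the $(t,x_1)$ variables with zero Cauchy data on $I$ — using the constraint equations \eqref{Constraints} for the data and the Gauss law $\partial_1(\partial_1 A_0 - \partial_t A_1) = \abs{\psi}^2$, which is propagated because $\square(\partial_t A_0 - \partial_1 A_1) = \partial_t(\psi^*\psi) - \partial_1(-\psi^*\gamma^0\gamma^1\psi) = \partial_t\rho + \partial_1 j = 0$ — whence it vanishes throughout the cone of dependence over $I$ by finite speed of propagation.

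I expect the only mildly delicate point to be the bookkeeping that makes the lifespan depend \emph{solely} on the conserved charge and the (fixed) data norms for $a,b$: one must check that the double-integral contribution to $A_\mu$ over a time interval $[0,T]$ is controlled by $T$ times $\sup_{t\le T}\norm{\psi(t,\cdot)}_{L^2}^2$ — which is $T$ times the conserved charge — with no hidden dependence on higher norms, so that the a priori bound on $\norm{A_\mu}_{L^\infty_{t,x_1}([0,T]\times\R)}$ is finite for every finite $T$ and the contraction constant on a short interval is governed by this bound alone. Everything else (the transport solution formulas, the $AC$/$L^1$ algebra for the Maxwell part, the Gronwall continuation) is routine once the correct function-space framework is in place, and the representation-dependent details of $\gamma^0\gamma^1$ are exactly what section \ref{MatricesSection} will supply.
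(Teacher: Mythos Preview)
Your proposal is correct and follows essentially the same route as the paper: energy/transport estimates for the Dirac part, d'Alembert-based $AC\times L^1$ estimates for the wave part, a contraction for local existence, and the conservation of charge \eqref{1dCharge} to supply the a priori bound that globalizes. The paper's own proof in section~\ref{WPproof} is in fact terser than yours (it leaves properties (ii), (iii), (v) implicit and iterates in $(\psi,A)$ jointly rather than substituting $A=A[\psi]$), and the only minor slip in your outline is the extra factor of $T$ in front of $\norm{b}_{L^1}$ in the $L^\infty$ bound for $A_\mu$---d'Alembert gives $\tfrac12\norm{b}_{L^1}$ with no $T$---which is harmless for the argument.
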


In particular, taking $d=1$, this result provides an alternative to the charge-class results from \cite{Bachelot2006,Zhang2014}, with a stronger form of well-posedness and at the same time a much simpler proof. The elementary proof is given in section \ref{WPproof}. We use iteration to prove local existence, and to close the estimates we only rely on the energy inequality for the Dirac equation and an estimate for the wave equation deduced from the D'Alembert representation.

\section{The Dirac matrices and a null form estimate}\label{MatricesSection}

In this section we specify our choice of the Dirac matrices, in dimensions $d \le 3$. We do this in such a way that the Dirac equation in \eqref{MD'}, when written in terms of the spinor components, has a form which makes it easy to work with. Recall that that the spinor has $N=2$ components in space dimensions $d=1,2$, and $N=4$ components when $d=3$. We write
\[
  \psi = \begin{pmatrix} u \\ v \end{pmatrix},
\]
where $u,v$ are $\C$-valued for $d=1,2$ and $\C^2$-valued for $d=3$.

\subsection{Space dimension $d=1$}

We choose
\[
  \gamma^0 =
  \left( \begin{matrix}
    0 & 1  \\
    1 &0
  \end{matrix} \right),
  \qquad
  \gamma^1=
  \left( \begin{matrix}
    0 & -1  \\
    1 & 0
  \end{matrix} \right).
\]
Then \eqref{Matrices} is satisfied, and \eqref{MD'} becomes
\begin{equation}\label{MD1d}
\left\{
\begin{aligned}
  (\partial_t + \partial_x)u &= i (A_0 + A_1) u -iMv,
  \\
  (\partial_t - \partial_x)v &= i (A_0 - A_1) v -iMu,
  \\
  (\partial_t^2-\partial_x^2) A_0 &= \abs{u}^2 + \abs{v}^2,
  \\
  (\partial_t^2-\partial_x^2) A_1 &= - \abs{u}^2 + \abs{v}^2.
\end{aligned}
\right.
\end{equation}
Since $A_0$, $A_1$ are real valued, the first two equations imply
\begin{equation}\label{MD1dSquared}
\left\{
\begin{aligned}
  (\partial_t + \partial_x)\abs{u}^2 &= -2M \im\left( \overline v u \right),
  \\
  (\partial_t - \partial_x)\abs{v}^2 &= 2M \im\left( \overline v u \right).
\end{aligned}
\right.
\end{equation}

\subsection{Dimension $d=2$}

We choose
\[
  \gamma^0 =
  \left( \begin{matrix}
    0 & 1  \\
    1 &0
  \end{matrix} \right),
  \qquad
  \gamma^1=
  \left( \begin{matrix}
    0 & -1  \\
    1 & 0
  \end{matrix} \right),
  \\
  \qquad
  \gamma^2=
  \left( \begin{matrix}
    i & 0  \\
    0 & -i
  \end{matrix} \right)
\]
Then \eqref{Matrices} is satisfied, and \eqref{MD'} becomes, writing $x=x_1$ for simplicity,
\begin{equation}\label{MD2d}
\left\{
\begin{aligned}
  (\partial_t + \partial_x)u &= i (A_0 + A_1) u + A_2 v -iMv,
  \\
  (\partial_t - \partial_x)v &= i (A_0 - A_1) v - A_2 u -iMu,
  \\
  (\partial_t^2-\partial_x^2) A_0 &= \abs{u}^2 + \abs{v}^2,
  \\
  (\partial_t^2-\partial_x^2) A_1 &= - \abs{u}^2 + \abs{v}^2,
  \\
  (\partial_t^2-\partial_x^2) A_2 &= - 2 \im (u \overline v).
\end{aligned}
\right.
\end{equation}
Then we also have
\begin{equation}\label{MD2dSquared}
\left\{
\begin{aligned}
  (\partial_t + \partial_x)\abs{u}^2 &=  2A_2 \re \left( \overline v u \right)
  -2M \im\left( \overline v u \right),
  \\
  (\partial_t - \partial_x)\abs{v}^2 &= - 2A_2 \re \left( \overline v u \right) + 2M \im \left( \overline v u \right).
\end{aligned}
\right.
\end{equation}

\subsection{Dimension $d=3$}

The $4 \times 4$ Dirac matrices are, in $2 \times 2$ block form,
\[
  \gamma^0 =
  \left( \begin{matrix}
    0 & I  \\
    I &0
  \end{matrix} \right),
  \qquad
  \gamma^1=
  \left( \begin{matrix}
    0 & -I  \\
    I & 0
  \end{matrix} \right),
  \\
  \qquad
  \gamma^2=
  \left( \begin{matrix}
    \rho & 0  \\
    0 & -\rho
  \end{matrix} \right),
  \\
  \\
  \qquad
  \gamma^3=
  \left( \begin{matrix}
    \kappa & 0  \\
    0 & -\kappa
  \end{matrix} \right),
\]
where $I$ is the $2 \times 2$ identity matrix and $\rho$, $\kappa$ must satisfy
\[
  \rho^* = - \rho, \qquad \rho^2 = -I, \qquad \kappa^* = -\kappa, \qquad \kappa^2 = -I, \qquad \rho\kappa + \kappa\rho = 0.
\]
Then \eqref{Matrices} is satisfied. For example, we can choose
\[
  \rho =
  \left( \begin{matrix}
    0 & -1  \\
    1 & 0
  \end{matrix} \right),
  \\
  \qquad
  \kappa =
  \left( \begin{matrix}
    i & 0  \\
    0 & -i
  \end{matrix} \right).
\]
Then \eqref{MD'} reads (with $x=x_1$)
\begin{equation}\label{MD3d}
\left\{
\begin{aligned}
  (\partial_t + \partial_x)u &= i (A_0 + A_1) u - i A_2 \rho v - i A_3 \kappa v -iMv,
  \\
  (\partial_t - \partial_x)v &= i (A_0 - A_1) v + i A_2 \rho u + i A_3 \kappa u -iMu,
  \\
  (\partial_t^2-\partial_x^2) A_0 &= \abs{u}^2 + \abs{v}^2,
  \\
  (\partial_t^2-\partial_x^2) A_1 &= - \abs{u}^2 + \abs{v}^2,
  \\
  (\partial_t^2-\partial_x^2) A_2 &= - 2 \re (v^* \rho u),
  \\
  (\partial_t^2-\partial_x^2) A_3 &= - 2 \re (v^* \kappa u),
\end{aligned}
\right.
\end{equation}
where $u, v$ are now $\C^2$-valued. Then also
\begin{equation}\label{MD3dSquared}
\left\{
\begin{aligned}
  (\partial_t + \partial_x)\abs{u}^2 &=  2A_2 \im \left( v^* \rho u \right)
  + 2A_3 \im \left( v^* \kappa u \right)
  - 2M \im\left( v^* u \right),
  \\
  (\partial_t - \partial_x)\abs{v}^2 &= - 2A_2 \im \left( v^* \rho u \right)
  - 2A_3 \im \left( v^* \kappa u \right)
  + 2M \im\left( v^* u \right).
\end{aligned}
\right.
\end{equation}

\subsection{A null form estimate}

When we move from $d=1$ to $d=2$ or $d=3$, the decisive difference is that we pick up the additional fields $A_2, A_3$. These fields will be better behaved than $A_0, A_1$, since the right hand sides of the corresponding equations in \eqref{MD2d} and \eqref{MD3d} are null forms: They contain a product of  $v^*$ and $u$, which propagate in transverse directions. This fact will be exploited through the following crucial estimate (which fails for $uu$ and $u \overline u$).

We use the following notation. For $x \in \R$ and $t > 0$, let $K^{(t,x)}$ denote the backward cone with vertex at $(t,x)$, that is,
\begin{equation}\label{1dCone}
  K^{(t,x)} = \left\{ (s,y) \in \R^2 \colon 0 < s < t, \;\; x-t+s < y < x+t-s \right\}.
\end{equation}

\begin{lemma}[Null form estimate]\label{NullLemma}
Consider a system of the form
\[
\begin{alignedat}{2}
  (\partial_t + \partial_x)u &= F(t,x), & \qquad u(0,x) &= f(x),
  \\
  (\partial_t - \partial_x)v &= G(t,x), & v(0,x) &= g(x),
\end{alignedat}
\]
where $x \in \R$, $t > 0$, and the functions are $\C$-valued. For the solution $(u,v)$ we have the estimate, for all $X \in \R$ and $T > 0$,
\[
  \iint_{K^{(T,X)}} \abs{uv} \, dx \, dt
  \\
  \le
  \left( \norm{f}_{L^1} + \int_0^T \norm{F(t)}_{L^1} \, dt \right)
  \left( \norm{g}_{L^1} + \int_0^T \norm{G(t)}_{L^1} \, dt \right).
\]
\end{lemma}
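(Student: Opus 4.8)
The plan is to use the explicit solution formulas coming from the method of characteristics. Since $(\partial_t+\partial_x)u = F$ with $u(0,\cdot)=f$, integrating along the characteristic line $s\mapsto (s, y_0+s)$ gives
\[
  u(t,x) = f(x-t) + \int_0^t F(s, x-t+s)\d s,
\]
and similarly, integrating $(\partial_t-\partial_x)v=G$ along $s\mapsto(s,y_0-s)$,
\[
  v(t,x) = g(x+t) + \int_0^t G(s, x+t-s)\d s.
\]
Thus $\abs{u(t,x)} \le U(x-t)$ and $\abs{v(t,x)} \le V(x+t)$, where
\[
  U(y) = \abs{f(y)} + \int_0^\infty \bigabs{F(s,y+s)}\d s,
  \qquad
  V(z) = \abs{g(z)} + \int_0^\infty \bigabs{G(s,z-s)}\d s.
\]
Here I extend $F,G$ by zero outside the relevant time interval; for the estimate over $K^{(T,X)}$ it suffices to restrict the $s$-integrals to $[0,T]$, so I will do that from the outset and work with $U,V$ defined by integrals over $[0,T]$.

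Next I bound the double integral. Since $\abs{uv} \le U(x-t)V(x+t)$ on $K^{(T,X)}$, I change variables to the null coordinates $\xi = x-t$, $\eta = x+t$ (Jacobian $\frac12$), under which the cone $K^{(T,X)}$ maps into the rectangle $\{ X-T < \xi < X+T\} \times \{X-T < \eta < X+T\}$ — in fact exactly onto the part of that square with $\eta > \xi$, but enlarging to the full square only helps. Hence
\[
  \iint_{K^{(T,X)}} \abs{uv}\d x\d t
  \le \frac12 \int_{X-T}^{X+T}\!\!\int_{X-T}^{X+T} U(\xi)V(\eta)\d\xi\d\eta
  \le \frac12 \norm{U}_{L^1(\R)}\norm{V}_{L^1(\R)}.
\]
Wait — the constant $\frac12$ is better than claimed, so I will simply bound $\norm{U}_{L^1}\norm{V}_{L^1}$ after dropping it, or keep it; either way the stated inequality follows a fortiori.

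Finally I estimate $\norm{U}_{L^1(\R)}$ and $\norm{V}_{L^1(\R)}$. By the triangle inequality in $L^1$ and Tonelli's theorem,
\[
  \norm{U}_{L^1(\R)}
  \le \norm{f}_{L^1} + \int_\R \int_0^T \bigabs{F(s,y+s)}\d s\d y
  = \norm{f}_{L^1} + \int_0^T \norm{F(s)}_{L^1}\d s,
\]
where in the last step I used the translation invariance of Lebesgue measure to conclude $\int_\R \abs{F(s,y+s)}\d y = \norm{F(s)}_{L^1}$ for each fixed $s$. The same computation for $V$, using the substitution $y \mapsto z-s$, gives $\norm{V}_{L^1(\R)} \le \norm{g}_{L^1} + \int_0^T \norm{G(s)}_{L^1}\d s$. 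Combining the three displays yields the asserted bound.

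There is no real obstacle here; the argument is entirely elementary. The only points requiring a modicum of care are (a) checking that the null-coordinate change of variables sends $K^{(T,X)}$ into the claimed square — this is immediate from \eqref{1dCone}, since $0<s<t\le T$ forces $\abs{\xi - X}, \abs{\eta - X} < T$ — and (b) justifying the interchange of integrations, which is legitimate by Tonelli since all integrands are nonnegative. If one wants to avoid any regularity assumptions on $F,G$ beyond integrability, one should read the characteristic formulas in the Duhamel (mild) sense, which is anyway how they will be applied.
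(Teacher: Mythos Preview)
Your proof is correct and follows essentially the same approach as the paper: integrate along characteristics to get $|u(t,x)|\le \mu(x-t)$, $|v(t,x)|\le \nu(x+t)$, then bound the cone integral by $\|\mu\|_{L^1}\|\nu\|_{L^1}$ and estimate these $L^1$ norms via Tonelli. The only cosmetic difference is that you spell out the null-coordinate change of variables (and pick up the harmless extra factor $\tfrac12$), whereas the paper simply invokes Fubini at that step.
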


\begin{proof}
Integrating, we have
\begin{align*}
  u(t,x) &= f(x-t) + \int_0^t F(s,x-t+s) \, ds,
  \\
  v(t,x) &= g(x+t) + \int_0^t G(s,x+t-s) \, ds.
\end{align*}
Taking absolute values, we see that for $0 \le t \le T$,
\begin{align*}
  \abs{u(t,x)} &\le \mu(x-t) := \abs{f(x-t)} + \int_0^T \abs{F(s,x-t+s)} \, ds,
  \\
  \abs{v(t,x)} &\le \nu(x+t) := \abs{g(x+t)} + \int_0^T \abs{G(s,x+t-s)} \, ds.
\end{align*}
By Fubini's theorem it is then obvious that
\[
  \iint_{K^{(T,X)}} \abs{uv} \, dx \, dt
  \le
  \norm{\mu}_{L^1(\R)} \norm{\nu}_{L^1(\R)}.
\]
But
\[
  \norm{\mu}_{L^1(\R)}
  \le \norm{f}_{L^1(\R)} + \int_0^T \norm{F(t)}_{L^1(\R)} \, dt,
\]
and similarly for $\nu$, so we get the desired estimate.
\end{proof}

\section{Data for ill-posedness}\label{DataSection}

In this section we specify the data that are used to prove Theorem \ref{MainThm2}.

Choose a cut-off $\chi \in C_c^\infty(\R)$ such that $\chi=1$ on $[-1,1]$. Let $\varepsilon > 0$. For the spinor datum and its approximations, which are $\C^N$-valued, we then take
\begin{equation}\label{SpinorData}
  \psi_0^{\mathrm{bad}}(x) = \chi(x_1) \cdots \chi(x_d) \begin{pmatrix} f(x_1) \\ 0 \\ \vdots \\ 0 \end{pmatrix},
  \quad
  \psi_{0,\varepsilon}(x) = \chi(x_1) \cdots \chi(x_d) \begin{pmatrix} f_\varepsilon(x_1) \\ 0 \\ \vdots \\ 0 \end{pmatrix},
\end{equation}
where
\begin{equation}\label{SpinorData2}
  f(x_1) =  \frac{1}{\abs{x_1}^{1/2}},
  \qquad
  f_\varepsilon(x_1) =  \frac{1}{(\varepsilon^2+x_1^2)^{1/4}}.
\end{equation}
Thus, $\chi f_\varepsilon \in C_c^\infty(\R)$, and for $1 \le p < 2$ we have $\chi f \in L^p(\R) \setminus L^2(\R)$ and
\[
  \lim_{\varepsilon \to 0} \norm{ \chi f_\varepsilon - \chi f }_{L^p(\R)} = 0.
\]
By the Hardy-Littlewood-Sobolev inequality,\footnote{We use the inequality $\norm{g}_{H^s(\R)} \le C \norm{g}_{L^p(\R)}$, valid for $s = 1/2-1/p$, $1 \le p < 2$.} we then conclude that $\chi f \in H^s(\R)$ for $s < 0$, and that
\[
  \lim_{\varepsilon \to 0} \norm{ \chi f_\varepsilon - \chi f }_{H^s(\R)} = 0.
\]
It follows that $\psi_{0,\varepsilon} \in C_c^\infty(\R^d)$, $\psi_0^{\mathrm{bad}} \in X_0 \setminus L^2(\R^d)$, and
\[
  \lim_{\varepsilon \to 0} \norm{ \psi_{0,\varepsilon} - \psi_0^{\mathrm{bad}} }_{X_0} = 0,
\]
where as before $X_0$ denotes either $H^s(\R^d)$, $s < 0$, or $L^p(\R^d)$, $1 \le p < 2$.

Next, we choose the data $a_{\mu,\varepsilon}, b_{\mu,\varepsilon} \in C_c^\infty(\R^d)$. The first alternative is to take vanishing data
\begin{equation}\label{abZero}
  a_{0,\varepsilon} = \dots = a_{d,\varepsilon} = 0,
  \qquad
  b_{0,\varepsilon} = \dots = b_{d,\varepsilon} = 0,
\end{equation}
as in \eqref{ZeroData}. The second alternative is to ensure that the constraints in \eqref{ConstrainedData} are satisfied. For this, we take all the data to vanish except $b_{1,\varepsilon}$, so the constraints reduce to
\[
  - \partial_1 b_{1,\varepsilon} = \abs{\psi_{0,\varepsilon}}^2 = \frac{1}{\sqrt{\varepsilon^2+x_1^2}} \quad \text{in $B$}.
\]
Integrating this, we obtain
\begin{equation}\label{abConstrained}
\left\{
\begin{aligned}
  &a_{0,\varepsilon} = \dots = a_{d,\varepsilon} = 0,
  \qquad
  b_{0,\varepsilon} = b_{2,\varepsilon} = \dots = b_{d,\varepsilon} = 0,
  \\
  &b_{1,\varepsilon}(x) = - \chi(x_1) \cdots \chi(x_d) \log\left(x_1 + \sqrt{\varepsilon^2 + x_1^2} \right),
\end{aligned}
\right.
\end{equation}
which satisfies \eqref{ConstrainedData}.
 
\section{Proof of ill-posedness}\label{IPsection}

We start by proving Theorem \ref{MainThm2}, which implies Theorem \ref{MainThm1}. Theorem \ref{MainThm3} is proved at the end of this section.

Let $d \le 3$, choose the Dirac matrices as in section \ref{MatricesSection}, and define the data
$
  (\psi_{0,\varepsilon},a_{\mu,\varepsilon},b_{\mu,\varepsilon}) \in C_c^\infty(\R^d)
$
by \eqref{SpinorData}, \eqref{SpinorData2}, and either \eqref{abZero} or \eqref{abConstrained}. Since the data depend only on $x_1$ in $B$, it follows from Theorem \ref{WPThm} that their maximal $C^\infty$ forward evolution
$
  (\psi_\varepsilon,A_{\mu,\varepsilon})
$
exists throughout the cone $K$ over $B$, and depends only on $t$ and $x_1$ there. Indeed, we can apply Theorem \ref{WPThm} with the data restricted to $x_2=\dots=x_d=0$.

We now claim that for $T > 0$ sufficiently small, the following holds for $\varepsilon > 0$:
\begin{equation}\label{Claim1}
  \abs{A_{j,\varepsilon}(t,x)} \le 1 \quad \text{in $K_T$, for $2 \le j \le d$},
\end{equation}
and
\begin{equation}\label{Claim2}
  \abs{\psi_\varepsilon(t,x)}^2 \ge \frac12 \abs{f_\varepsilon(x_1-t)}^2 \quad \text{in $K_T \cap \{ (t,x) \colon 0 < t < x_1 \}$}.
\end{equation}
Moreover,
\begin{equation}\label{Claim3}
  A_{0,\varepsilon}(t,x) \ge c(Q) \abs{\log\varepsilon}
  \quad \text{in any compact $Q \subset K_T \cap \{ (t,x) \colon \abs{x_1} < t \}$},
\end{equation}
for all sufficiently small $\varepsilon > 0$, and some constant $c(Q) > 0$ depending only on $Q$.

Once we have obtained \eqref{Claim3}, then Theorem \ref{MainThm2} is proved. The plan is now as follows: First, we prove that \eqref{Claim2} implies \eqref{Claim3}, then we prove \eqref{Claim1}, and finally we prove \eqref{Claim2}.

Since \eqref{Claim1}--\eqref{Claim3} are restricted to the cone $K$, where the solution depends only on $t$ and $x_1$, it suffices to prove them for $x_2=\dots=x_d=0$. For the remainder of this section we therefore restrict to $x_2=\dots=x_d=0$. The solution then exists for all $t \ge 0$ and $x_1 \in \R$, by Theorem \ref{WPThm}. To simplify the notation we also write $x=x_1$.

\subsection{Proof that $\eqref{Claim2} \implies \eqref{Claim3}$}

Since $(\partial_t^2 - \partial_x^2) A_{0,\varepsilon} = \abs{\psi_\varepsilon}^2$ with vanishing initial data, we have by d'Alembert's formula
\[
  A_{0,\varepsilon}(t,x) = \frac12 \iint_{K^{(t,x)}} \abs{\psi_\varepsilon}^2 \, dy \, ds
  =
  \frac12 \int_0^t \int_{x-t+s}^{x+t-s} \abs{\psi_\varepsilon(s,y)}^2 \, dy \, ds,
\]
with notation as in \eqref{1dCone}.
Take $\abs{x} < t < T \ll 1$ and restrict the integration to the cone $K^{(t,x)} \cap \{ (s,y) \colon s < y \}$. Assuming \eqref{Claim2} holds, we thus obtain
\begin{align*}
  A_{0,\varepsilon}(t,x) &\ge \frac12 \int_0^{\frac{x+t}{2}} \int_{s}^{x+t-s} \abs{\psi_\varepsilon(s,y)}^2 \, dy \, ds
  \\
  &\ge
  \frac14 \int_0^{\frac{x+t}{2}} \int_{s}^{x+t-s} \frac{1}{\sqrt{\varepsilon^2+(y-s)^2}} \, dy \, ds
  \\
  &\ge
  \frac14 \int_0^{\frac{x+t}{2}} \int_{s}^{x+t-s} \frac{1}{\varepsilon+y-s} \, dy \, ds
  \\
  &= \frac{x+t}{8} (-\log\varepsilon)
  + \frac18 (\varepsilon+x+t) \left( \log(\varepsilon+x+t) - 1 \right)
  - \frac12 \varepsilon (\log\varepsilon-1),
\end{align*}
and \eqref{Claim3} follows.

\subsection{Proof of \eqref{Claim1}} This is only relevant in dimensions $d = 2,3$. We show the proof for $d=2$, and comment on $d=3$ at the end.

Assuming now $d=2$, then the system is as in \eqref{MD2d}:
\begin{equation}\label{MD2dEps}
\left\{
\begin{aligned}
  (\partial_t + \partial_x)u_\varepsilon &= i (A_{0,\varepsilon} + A_{1,\varepsilon}) u_\varepsilon + A_{2,\varepsilon} v_\varepsilon -iMv_\varepsilon,
  \\
  (\partial_t - \partial_x)v_\varepsilon &= i (A_{0,\varepsilon} - A_{1,\varepsilon}) v_\varepsilon - A_{2,\varepsilon} u_\varepsilon -iMu_\varepsilon,
  \\
  (\partial_t^2-\partial_x^2) A_{0,\varepsilon} &= \abs{u_\varepsilon}^2 + \abs{v_\varepsilon}^2,
  \\
  (\partial_t^2-\partial_x^2) A_{1,\varepsilon} &= - \abs{u_\varepsilon}^2 + \abs{v_\varepsilon}^2,
  \\
  (\partial_t^2-\partial_x^2) A_{2,\varepsilon} &= - 2 \im (u_\varepsilon \overline{v_\varepsilon}),
\end{aligned}
\right.
\end{equation}
with data
\[
  u_\varepsilon(0,x) = \chi f_\varepsilon(x) = \frac{\chi(x)}{(\varepsilon^2+x^2)^{1/4}},
  \\
  \qquad v_\varepsilon(0,x) = 0,
\]
and either \eqref{abZero} or \eqref{abConstrained} (with $x=x_1$ and $x_2=0$). The solution exists globally and is $C^\infty$, by Theorem \ref{WPThm}.

We want to prove \eqref{Claim1}. This will follow if we can prove that for $T > 0$ sufficiently small,
\begin{equation}\label{Claim1'}
  \sup_{(t,x) \in [0,T] \times \R} \abs{A_{2,\varepsilon}(t,x)} \le 1
\end{equation}
for all $\varepsilon > 0$.

By d'Alembert's formula, since $a_{2,\varepsilon} = b_{2,\varepsilon} = 0$,
\begin{equation}\label{A2}
  A_{2,\varepsilon}(t,x)  =
  \iint_{K^{(t,x)}} \im (u_\varepsilon \overline{v_\varepsilon})(s,y) \, dy \, ds,
\end{equation}
were $K^{(t,x)}$ denotes the backward cone \eqref{1dCone}.

The idea is now to apply Lemma \ref{NullLemma} to the first two equations in \eqref{MD2dEps}. But first we need to integrate out the terms involving $(A_{0,\varepsilon} \pm A_{1,\varepsilon})$. Define $\phi_{+,\varepsilon}$, $\phi_{-,\varepsilon}$ by
\[
\begin{alignedat}{2}
  (\partial_t + \partial_x) \phi_{+,\varepsilon} &= A_{0,\varepsilon}+A_{1,\varepsilon}, & \qquad \phi_{+,\varepsilon}(0,x) &= 0,
  \\
  (\partial_t - \partial_x) \phi_{-,\varepsilon} &= A_{0,\varepsilon}-A_{1,\varepsilon}, & \qquad \phi_{-,\varepsilon}(0,x) &= 0,
\end{alignedat}
\]
that is,
\begin{align*}
  \phi_{+,\varepsilon}(t,x) &= \int_0^t (A_{0,\varepsilon}+A_{1,\varepsilon})(s,x-t+s) \, ds,
  \\
  \phi_{-,\varepsilon}(t,x) &= \int_0^t (A_{0,\varepsilon}-A_{1,\varepsilon})(s,x+t-s) \, ds.
\end{align*}
Then from the first two equations in \eqref{MD2dEps} we get
\begin{equation}\label{uvMod}
\begin{aligned}
  (\partial_t + \partial_x)(e^{-i\phi_{+,\varepsilon}} u_\varepsilon) &= e^{-i\phi_{+,\varepsilon}}[ A_{2,\varepsilon} v_\varepsilon -iMv_\varepsilon],
  \\
  (\partial_t - \partial_x)(e^{-i\phi_{-,\varepsilon}} v_\varepsilon) &= e^{-i\phi_{-,\varepsilon}}[- A_{2,\varepsilon} u_\varepsilon -iMu_\varepsilon],
\end{aligned}
\end{equation}
so by \eqref{A2} and Lemma \ref{NullLemma},
\begin{equation}\label{A2Est0}
\begin{aligned}
  \norm{A_{2,\varepsilon}(t)}_{L^\infty}
  &\le
  \sup_{x \in \R} \iint_{K^{(t,x)}} \abs{u_\varepsilon} \abs{v_\varepsilon} \, dy \, ds
  \\
  &=
  \sup_{x \in \R} \iint_{K^{(t,x)}} \abs{e^{-i\phi_{+,\varepsilon}} u_\varepsilon} \abs{e^{-i\phi_{-,\varepsilon}} v_\varepsilon} \, dy \, ds
  \\
  &\le
  \left( \norm{\chi f_\varepsilon}_{L^1} + \int_0^t (M + \norm{A_{2,\varepsilon}(s)}_{L^\infty})\norm{v_\varepsilon(s)}_{L^1} \, ds \right)
  \\
  &\quad \times \left( \int_0^t (M + \norm{A_{2,\varepsilon}(s)}_{L^\infty})\norm{u_\varepsilon(s)}_{L^1} \, ds \right).
\end{aligned}
\end{equation}
To control the $L^1$ norms of $u_\varepsilon(t)$ and $v_\varepsilon(t)$, we use again \eqref{uvMod}, which implies
\begin{align*}
  (e^{-i\phi_{+,\varepsilon}} u_\varepsilon)(t,x) &= \chi f_\varepsilon(x-t) + \int_0^t \left(e^{-i\phi_{+,\varepsilon}}[ A_{2,\varepsilon} v_\varepsilon -iMv_\varepsilon]\right)(s,x-t+s) \, ds,
  \\
  (e^{-i\phi_{-,\varepsilon}} v_\varepsilon)(t,x) &= \int_0^t \left( e^{-i\phi_{-,\varepsilon}}[- A_{2,\varepsilon} u_\varepsilon -iMu_\varepsilon] \right)(s,x+t-s) \, ds.
\end{align*}
Take $L^1$ norms in $x$ to get
\begin{align*}
  \norm{u_\varepsilon(t)}_{L^1} &\le \norm{\chi f_\varepsilon}_{L^1} + \int_0^t (M + \norm{A_{2,\varepsilon}(s)}_{L^\infty})\norm{v_\varepsilon(s)}_{L^1} \, ds,
  \\
  \norm{v_\varepsilon(t)}_{L^1} &\le \int_0^t (M + \norm{A_{2,\varepsilon}(s)}_{L^\infty})\norm{u_\varepsilon(s)}_{L^1} \, ds.
\end{align*}
Adding these and applying Gr\"onwall's inequality yields
\begin{equation}\label{uvBound}
  \norm{u_\varepsilon(t)}_{L^1} + \norm{v_\varepsilon(t)}_{L^1} \le \norm{\chi f_\varepsilon}_{L^1} e^{\int_0^t (M + \norm{A_{2,\varepsilon}(s)}_{L^\infty}) \, ds}.
\end{equation}
Observing that
\[
  \norm{\chi f_\varepsilon}_{L^1} \le C := \int_{\R} \frac{\abs{\chi(x)}}{\abs{x}^{1/2}} \, dx < \infty,
\]
and defining the continuous function $g_\varepsilon \colon [0,\infty) \to [0,\infty)$ by
\[
  g_\varepsilon(t) = \sup_{0 \le s \le t} \norm{A_{2,\varepsilon}(s)}_{L^\infty},
\]
we conclude from \eqref{A2Est0} and \eqref{uvBound} that
\begin{equation}\label{gEst}
  g_\varepsilon(t) \le C^2\left(1 + t (M + g_\varepsilon(t)) e^{t (M + g_\varepsilon(t))}\right)
  \left(t (M + g_\varepsilon(t)) e^{t (M + g_\varepsilon(t))}\right).
\end{equation}

We now use a boot-strap argument to show that there exists a $\delta > 0$, depending only on $C$ and $M$, such that for $0 \le t \le \delta$,
\begin{equation}\label{gBound}
  g_\varepsilon(t) \le 1.
\end{equation}
Assuming this holds for some $t > 0$, then by \eqref{gEst} we have
\begin{equation}\label{gBound2}
  g_\varepsilon(t) \le C^2 \alpha(t),
\end{equation}
where the increasing function $\alpha \colon [0,\infty) \to [0,\infty)$ is defined by
\[
  \alpha(t) = \left(1 + t (M + 1) e^{t (M + 1)}\right)
  \left(t (M + 1) e^{t (M + 1)}\right).
\]
Since $\alpha(0) = 0$, there exists $\delta > 0$, depending only on $M$ and $C$, such that
\begin{equation}\label{deltaDef}
  C^2 \alpha(\delta) \le \frac12.
\end{equation}

By a continuity argument it now follows that \eqref{gBound} holds for all $t \in [0,\delta]$. Indeed, since $g_\varepsilon(0) = 0$, then \eqref{gBound} certainly holds for sufficiently small $t > 0$. And if \eqref{gBound} holds on some interval $[0,T] \subset [0,\delta]$, then by \eqref{gBound2} and \eqref{deltaDef} we have in fact $g_\varepsilon(t) \le 1/2$ on that interval, so \eqref{gBound} holds on a slightly larger interval.

This concludes the proof of \eqref{Claim1} for $d=2$. For $d=3$ the same proof goes through with some obvious changes. Indeed, the system \eqref{MD3d} has essentially the same structure as \eqref{MD2d}, and in particular the equations for $A_2, A_3$ have null forms in the right hand side. Thus, we obtain
\begin{equation}\label{Claim1'3d}
  \sup_{(t,x) \in [0,T] \times \R} \left( \abs{A_{2,\varepsilon}(t,x)} + \abs{A_{3,\varepsilon}(t,x)} \right) \le 1
\end{equation}
for $T > 0$ sufficiently small.

\subsection{Proof of \eqref{Claim2}}

Since we restrict to $x_2=\dots=x_d=0$ and write $x=x_1$, then \eqref{Claim2} reduces to proving that for $T > 0$ sufficiently small,
\begin{equation}\label{Claim2'}
  \abs{u_\varepsilon(t,x)}^2 \ge \frac12 \abs{f_\varepsilon(x-t)}^2
  \quad \text{for $0 < t < x < 1-t$ and $t < T$}.
\end{equation}
We do the proof for $d=2$, and comment on $d=1$ and $d=3$ at the end.

Assuming $d=2$, we use \eqref{MD2dSquared}. Thus,
\begin{align*}
  (\partial_t + \partial_x) \abs{u_\varepsilon}^2 &= F_\varepsilon := 2A_{2,\varepsilon} \re (u_\varepsilon \overline {v_\varepsilon}) - 2M \im (u_\varepsilon \overline {v_\varepsilon}),
  \\
  (\partial_t - \partial_x) \abs{v_\varepsilon}^2 &= G_\varepsilon:= - 2A_{2,\varepsilon} \re (u_\varepsilon \overline {v_\varepsilon}) + 2M \im (u_\varepsilon \overline {v_\varepsilon}),
\end{align*}
and therefore
\begin{align}
  \label{u}
  \abs{u_\varepsilon(t,x)}^2 &= \abs{\chi f_\varepsilon(x-t)}^2 + \int_0^t F_\varepsilon(s,x-t+s) \, ds,
  \\
  \label{v}
  \abs{v_\varepsilon(t,x)}^2 &= \int_0^t G_\varepsilon(s,x+t-s) \, ds.
\end{align}
By \eqref{Claim1'}, for $T > 0$ sufficiently small we have
\begin{equation}\label{FGbounds}
  \abs{F_\varepsilon},\abs{G_\varepsilon} \le (M+1)(\abs{u_\varepsilon}^2+\abs{v_\varepsilon}^2) \quad \text{in $[0,T] \times \R$},
\end{equation}
hence
\begin{align}
  \label{u2}
  \abs{u_\varepsilon(t,x)}^2 &\le \abs{\chi f_\varepsilon(x-t)}^2 + (M+1)\int_0^t (\abs{u_\varepsilon}^2+\abs{v_\varepsilon}^2)(s,x-t+s) \, ds,
  \\
  \label{v2}
  \abs{v_\varepsilon(t,x)}^2 &\le (M+1) \int_0^t (\abs{u_\varepsilon}^2+\abs{v_\varepsilon}^2)(s,x+t-s) \, ds
\end{align}
for $t \in [0,T]$, $x \in \R$.

The idea is now to apply a boot-strap argument. For $\rho \in (0,1-2T)$, define
\[
  B_{\rho,\varepsilon}(s) = \sup_{\rho + s \le y \le 1-s} \left( \abs{u_\varepsilon(s,y}^2+\abs{v_\varepsilon(s,y)}^2 \right)
  \qquad (0 \le s \le T).
\]
If $\rho + t \le x \le 1-t$,
the integrands in \eqref{u2}, \eqref{v2} are bounded by $B_{\rho,\varepsilon}(s)$, so
\[
  \abs{u_\varepsilon(t,x)}^2 + \abs{v_\varepsilon(t,x)}^2
  \le \frac{1}{\sqrt{\varepsilon^2 + (x-t)^2}} + 2(M+1)\int_0^t B_{\rho,\varepsilon}(s) \, ds
\]
Taking the supremum over $x \in [\rho+t,1-t]$ gives
\[
  B_{\rho,\varepsilon}(t)
  \le \frac{1}{\sqrt{\varepsilon^2 + \rho^2}} + 2(M+1)\int_0^t B_{\rho,\varepsilon}(s) \, ds.
\]
By Gr\"onwall's inequality we conclude that
\begin{equation}\label{Bbound}
  B_{\rho,\varepsilon}(t)
  \le \frac{1}{\sqrt{\varepsilon^2 + \rho^2}} e^{2(M+1)t} \le \frac{3}{\sqrt{\varepsilon^2 + \rho^2}} \quad \text{for $t \in [0,T]$},
\end{equation}
assuming $T > 0$ is so small that $2(M+1)T < 1$.

Combining \eqref{Bbound}, \eqref{u} and \eqref{FGbounds}, we obtain, for $\rho > 0$, $x \in [\rho+t,1-t]$ and $t \le T$,
\begin{align*}
  \abs{u_\varepsilon(t,x)}^2 
  &\ge \abs{\chi f_\varepsilon(x-t)}^2 - (M+1)\int_0^t (\abs{u_\varepsilon}^2+\abs{v_\varepsilon}^2)(s,x-t+s) \, ds
  \\
  &\ge \frac{1}{\sqrt{\varepsilon^2 + (x-t)^2}} - (M+1) \int_0^t B_{\rho,\varepsilon}(s) \, ds
  \\
  &\ge \frac{1}{\sqrt{\varepsilon^2 + (x-t)^2}} -  \frac{3(M+1)t}{\sqrt{\varepsilon^2 + \rho^2}},
\end{align*}
where we also used the fact that $\chi=1$ on $[-1,1]$.
Choosing $\rho = x-t$ and assuming $T > 0$ so small that $6(M+1)T < 1$, we obtain the claimed inequality \eqref{Claim2'}.

This completes the proof of \eqref{Claim2} for $d=2$. The proof for $d=1,3$ works out the same way, but instead of \eqref{MD2dSquared} we use either \eqref{MD1dSquared} or \eqref{MD3dSquared}, and in the case $d=3$ we use \eqref{Claim1'3d} instead of \eqref{Claim1'}.

\subsection{Proof of Theorem \ref{MainThm3}}

Define $\psi_0^{\mathrm{bad}} \in X_0 \setminus L^2(\R^d)$ as in section \ref{DataSection}. Assume there exist (i) a neighbourhood $\Omega_0$ of $\psi_0^{\mathrm{bad}}$ in $X_0$, and (ii) continuous maps
\[
  I_\mu, J_\mu \colon \Omega_0 \longrightarrow \mathcal D'(B),
\]
such that if $\psi_0 \in \Omega_0 \cap C_c^\infty(\R^d)$, then defining
\[
  a_\mu = I_\mu[\psi_0], \qquad b_\mu = J_\mu[\psi_0] \qquad (\mu=0,\dots,d),
\]
the constraint equations \eqref{Constraints} are satisfied in $B$.

We will show that these assumptions lead to a contradiction. Define $\psi_{0,\varepsilon} \in C_c^\infty(\R^d)$ as in section \ref{DataSection}. Then $\psi_{0,\varepsilon} \to \psi_0^{\mathrm{bad}}$ in $X_0$ as $\varepsilon \to 0$, so $\psi_{0,\varepsilon}$ belongs to $\Omega_0$ for all $\varepsilon > 0$ small enough, and we may define
\[
  a_{\mu,\varepsilon} = I_\mu[\psi_{0,\varepsilon}], \qquad b_{\mu,\varepsilon} = J_\mu[\psi_{0,\varepsilon}] \qquad (\mu=0,\dots,d).
\]
By assumption, these fields satisfy the constraints \eqref{Constraints} in $B$, so in particular
\[
  \sum_{j=1}^d \partial_j (\partial_j a_{0,\varepsilon} - b_{j,\varepsilon}) = \abs{\psi_{0,\varepsilon}}^2 \quad \text{in $B$}.
\]
By the assumed continuity of the maps $I_\mu, J_\mu$, the left hand side must converge in $\mathcal D'(B)$ as $\varepsilon \to 0$. But the right hand side equals
\[
  \abs{\psi_{0,\varepsilon}(x)}^2 = \frac{1}{\sqrt{\varepsilon^2 + x_1^2}}
  \quad \text{for $x \in B$},
\]
and this function does not have a limit in the sense distributions in $B$, as $\varepsilon \to 0$.

\section{Proof of well-posedness}\label{WPproof}

In this section we prove Theorem \ref{WPThm}. To ease the notation we write $x=x_1$ throughout. To prove local existence we use an iteration and rely only on the following elementary estimates.

\subsection{Linear estimates}

Firstly, for the Dirac equation
\[
  (- i\gamma^0 \partial_t - i\gamma^1 \partial_x + M) \psi = F(t,x), \qquad \psi(0,x) = \psi_0(x),
\]
we shall use the energy inequality, for $t > 0$,
\begin{equation}\label{EnergyInequality}
  \norm{\psi(t)}_{L^2(\R)} \le \norm{\psi_0}_{L^2(\R)} + \int_0^t \norm{F(s)}_{L^2(\R)} \, ds.
\end{equation}
This is proved as follows. By approximation, we may assume that $\psi_0$ and $F$ are smooth and compactly supported in $x$. Premultiplying the equation by $i\overline\psi=i\psi^*\gamma^0$ and taking real parts yields $\partial_t \rho + \partial_x j = \re(i\psi^*\gamma^0 F)$, where $\rho=\psi^*\psi$ and $j=\psi^*\gamma^0\gamma^1\psi$. Integration in $x$ gives
\[
  \frac{d}{dt} \int_{\R} \abs{\psi}^2 \, dx = 2\re \int_{\R} i\psi^* \gamma^0 F \, dx \le 2 \norm{\psi(t)}_{L^2} \norm{F(t)}_{L^2},
\]
which implies \eqref{EnergyInequality}.

Secondly, for the wave equation
\[
  \square u = G(t,x), \qquad u(0,x) = f(x), \qquad \partial_t u(0,x) = g(x),
\]
we shall use the estimates, for $t > 0$,
\begin{align}
  \label{WaveEstimateA}
  \norm{u(t)}_{L^\infty(\R)} &\le \norm{f}_{L^\infty(\R)} + \norm{g}_{L^1(\R)}
  + \int_0^t \norm{G(s)}_{L^1(\R)} \, ds,
  \\
  \label{WaveEstimateB}
  \norm{\partial_x u(t)}_{L^1(\R)} &\le \norm{f'}_{L^1(\R)} + \norm{g}_{L^1(\R)}
  + \int_0^t \norm{G(s)}_{L^1(\R)} \, ds,
  \\
  \label{WaveEstimateC}
  \norm{\partial_t u(t)}_{L^1(\R)} &\le \norm{f'}_{L^1(\R)} + \norm{g}_{L^1(\R)}
  + \int_0^t \norm{G(s)}_{L^1(\R)} \, ds,
\end{align}
which are immediate from D'Alembert's formula,
\[
  u(t,x) = \frac{f(x+t) + f(x-t)}{2} + \frac12 \int_{x-t}^{x+t} g(y) \, dy
  + \frac12 \int_0^t \int_{x-(t-s)}^{x+t-s} G(s,y) \, dy \, ds.
\]
Adding \eqref{WaveEstimateA}--\eqref{WaveEstimateC} gives
\begin{equation}\label{WaveEstimate}
  \norm{u(t)}_{AC} + \norm{\partial_t u(t)}_{L^1}
  \le 3 \left( \norm{f}_{AC(\R)} + \norm{g}_{L^1} + \int_0^t \norm{G(s)}_{L^1} \, ds \right).
\end{equation}

\subsection{The local result}\label{Local}

With the above linear estimates, it is now an easy matter to prove the local well-posedness of \eqref{MD'} by iteration, for data with the regularity $\psi_0 \in L^2(\R)$, $a_\mu \in AC(\R)$ and $b_\mu \in L^1(\R)$. Indeed, applying the energy inequality \eqref{EnergyInequality} to the Dirac equation in \eqref{MD'}, we use the trivial bilinear estimate
\[
  \int_0^T \norm{A_\mu \gamma^\mu \psi(s)}_{L^2} \, ds
  \le CT \norm{A}_{C([0,T];L^\infty)} \norm{\psi}_{C([0,T];L^2)},
\]
where $C([0,T];L^p)$ is equipped with the sup norm. Moreover, applying \eqref{WaveEstimate} to the wave equations in \eqref{MD'} we use the equally trivial bilinear bound
\begin{equation}\label{BilinearB}
  \int_0^T \norm{\psi^* \gamma^0 \gamma^\mu \psi(s)}_{L^1} \, ds
  \le CT \norm{\psi}_{C([0,T];L^2)}^2.
\end{equation}
By a standard contraction argument, which we do not repeat here, one now obtains local well-posedness with a time of existence $T > 0$ determined by 
\[
  CT\left( \norm{\psi_0}_{L^2} + \sum_{\mu=0}^d \norm{a_\mu}_{AC} + \sum_{\mu=0}^d \norm{b_\mu}_{L^1} \right) \le 1,
\]
where $C$ is a universal constant. This proves Theorem \ref{WPThm} for such $T$. Next, we show that the results extend globally.

\subsection{The global result}\label{Global} To extend the local result globally in time, it suffices to obtain an a priori bound on the solution $(\psi,A,\partial_t A)(t)$ in $L^2(\R) \times AC(\R) \times L^1(\R)$. For $\psi$, this bound is directly provided by the conservation of charge, \eqref{1dCharge}. The latter also provides the necessary bound for $(A,\partial_t A)$, via the linear estimate \eqref{WaveEstimate} and the bilinear estimate \eqref{BilinearB}. This concludes the proof of Theorem \ref{WPThm}.

\bibliographystyle{amsplain}
\bibliography{database}

\providecommand{\bysame}{\leavevmode\hbox to3em{\hrulefill}\thinspace}
\providecommand{\MR}{\relax\ifhmode\unskip\space\fi MR }
\providecommand{\MRhref}[2]{%
  \href{http://www.ams.org/mathscinet-getitem?mr=#1}{#2}
}
\providecommand{\href}[2]{#2}
\begin{thebibliography}{10}

\bibitem{Bachelot2006}
Alain Bachelot, \emph{Global {C}auchy problem for semilinear hyperbolic systems
  with nonlocal interactions. {A}pplications to {D}irac equations}, J. Math.
  Pures Appl. (9) \textbf{86} (2006), no.~3, 201--236. \MR{2257730}

\bibitem{Bournaveas2000}
N.~Bournaveas, \emph{A new proof of global existence for the {D}irac
  {K}lein-{G}ordon equations in one space dimension}, J. Funct. Anal.
  \textbf{173} (2000), no.~1, 203--213. \MR{1760283 (2001c:35128)}

\bibitem{Bournaveas1996}
Nikolaos Bournaveas, \emph{Local existence for the {M}axwell-{D}irac equations
  in three space dimensions}, Comm. Partial Differential Equations \textbf{21}
  (1996), no.~5-6, 693--720. \MR{1391520}

\bibitem{Chadam1973}
John~M. Chadam, \emph{Global solutions of the {C}auchy problem for the
  (classical) coupled {M}axwell-{D}irac equations in one space dimension}, J.
  Functional Analysis \textbf{13} (1973), 173--184. \MR{0368640 (51 \#4881)}

\bibitem{Selberg2010b}
Piero D'Ancona, Damiano Foschi, and Sigmund Selberg, \emph{Null structure and
  almost optimal local well-posedness of the {M}axwell-{D}irac system}, Amer.
  J. Math. \textbf{132} (2010), no.~3, 771--839. \MR{2666908}

\bibitem{Selberg2011}
Piero D'Ancona and Sigmund Selberg, \emph{Global well-posedness of the
  {M}axwell-{D}irac system in two space dimensions}, J. Funct. Anal.
  \textbf{260} (2011), no.~8, 2300--2365. \MR{2772373}

\bibitem{Delgado1978}
V.~Delgado, \emph{Global solutions of the {C}auchy problem for the (classical)
  coupled {M}axwell-{D}irac and other nonlinear {D}irac equations in one space
  dimension}, Proc. Amer. Math. Soc. \textbf{69} (1978), no.~2, 289--296.
  \MR{0463658}

\bibitem{Esteban1996}
Maria~J. Esteban, Vladimir Georgiev, and Eric S\'{e}r\'{e}, \emph{Stationary
  solutions of the {M}axwell-{D}irac and the {K}lein-{G}ordon-{D}irac
  equations}, Calc. Var. Partial Differential Equations \textbf{4} (1996),
  no.~3, 265--281. \MR{1386737}

\bibitem{Flato1997}
Mosh\'{e} Flato, Jacques C.~H. Simon, and Erik Taflin, \emph{Asymptotic
  completeness, global existence and the infrared problem for the
  {M}axwell-{D}irac equations}, Mem. Amer. Math. Soc. \textbf{127} (1997),
  no.~606, x+311. \MR{1407900}

\bibitem{Folland1999}
Gerald~B. Folland, \emph{Real analysis}, second ed., Pure and Applied
  Mathematics (New York), John Wiley \& Sons, Inc., New York, 1999, Modern
  techniques and their applications, A Wiley-Interscience Publication.
  \MR{1681462}

\bibitem{Gavrus2017}
Cristian~Dan Gavrus, \emph{Global well-posedness and parametrices for critical
  {M}axwell-{D}irac and massive {M}axwell-{K}lein-{G}ordon equations with small
  {S}obolev data}, ProQuest LLC, Ann Arbor, MI, 2017, Thesis
  (Ph.D.)--University of California, Berkeley. \MR{3797276}

\bibitem{Georgiev1991}
Vladimir Georgiev, \emph{Small amplitude solutions of the {M}axwell-{D}irac
  equations}, Indiana Univ. Math. J. \textbf{40} (1991), no.~3, 845--883.
  \MR{1129332}

\bibitem{Gross1966}
Leonard Gross, \emph{The {C}auchy problem for the coupled {M}axwell and {D}irac
  equations}, Comm. Pure Appl. Math. \textbf{19} (1966), 1--15. \MR{190520}

\bibitem{Huh2010}
Hyungjin Huh, \emph{Global charge solutions of {M}axwell-{D}irac equations in
  {$\Bbb R^{1+1}$}}, J. Phys. A \textbf{43} (2010), no.~44, 445206, 7.
  \MR{2733825}

\bibitem{Okamoto2013}
Mamoru Okamoto, \emph{Well-posedness and ill-posedness of the {C}auchy problem
  for the {M}axwell-{D}irac system in {$1+1$} space time dimensions}, Adv.
  Differential Equations \textbf{18} (2013), no.~1-2, 179--199. \MR{3052714}

\bibitem{Psarelli2005}
Maria Psarelli, \emph{Maxwell-{D}irac equations in four-dimensional {M}inkowski
  space}, Comm. Partial Differential Equations \textbf{30} (2005), no.~1-3,
  97--119. \MR{2131047}

\bibitem{Selberg2018}
Sigmund Selberg, \emph{Global existence in the critical space for the
  {T}hirring and {G}ross-{N}eveu models coupled with the electromagnetic
  field}, Discrete Contin. Dyn. Syst. \textbf{38} (2018), no.~5, 2555--2569.
  \MR{3809049}

\bibitem{Zhang2014}
Aiguo You and Yongqian Zhang, \emph{Global solution to {M}axwell-{D}irac
  equations in {$1+1$} dimensions}, Nonlinear Anal. \textbf{98} (2014),
  226--236. \MR{3158454}

\end{thebibliography}

\end{document}